\newcommand{\fra}{\forall}
\newcommand{\jump}[1]{\llbracket #1 \rrbracket}
\newcommand{\av}[1]{\{\!\!\{#1\}\!\!\}}
\newcommand{\tnorm}{\@ifstar\@tnorms\@tnorm}
\newcommand{\@tnorms}[1]{%
	\left|\mkern-1.5mu\left|\mkern-1.5mu\left|
	#1
	\right|\mkern-1.5mu\right|\mkern-1.5mu\right|
}
\newcommand{\@tnorm}[2][]{%
	\mathopen{#1|\mkern-1.5mu#1|\mkern-1.5mu#1|}
	#2
	\mathclose{#1|\mkern-1.5mu#1|\mkern-1.5mu#1|}
}
\newcommand{\jsnorm}[1]{\abs{#1}_{\mathrm{J}}}
\newcommand{\dgnorm}[1]{\norm{#1}_{\mathrm{dg}}}
\newcommand{\fsnorm}[1]{\abs{#1}_{\mathrm{F}}}
\newcommand{\vnorm}[1]{\tnorm{#1}_v}
\newcommand{\pfnorm}[1]{\norm{#1}_p}
\newcommand{\pnorm}[1]{\tnorm{#1}_p}
\newcommand{\gsnorm}[1]{\abs{#1}_{\mathrm{G}}}
\newcommand{\osc}[1]{\mathrm{osc}(#1)}
\newcommand\ip[2]{\langle #1, #2 \rangle}
\spnewtheorem{assumption}{Assumption}{\bf}{\it}
\crefname{assumption}{Assumption}{Assumptions}
\crefname{theorem}{Theorem}{Theorems}
\crefname{lemma}{Lemma}{Lemmas}
\crefname{corollary}{Corollary}{Corollaries}
\crefname{enumi}{Item}{Items}
\crefname{appendix}{Appendix}{Appendices}
\crefname{section}{Section}{Sections}
\crefname{table}{Table}{Tables}
\crefname{figure}{Figure}{Figures}
\journalname{}
\begin{document}
%------------------------------------------------------------------------------
\title{Analysis of pressure-robust embedded-hybridized discontinuous
  Galerkin methods for the Stokes problem under minimal regularity}
\titlerunning{Analysis of EDG--HDG methods for the Stokes problem under
  minimal regularity}
%------------------------------------------------------------------------------
\author{Aaron~Baier-Reinio \and Sander~Rhebergen \and Garth~N.~Wells}
\institute{
  Aaron~Baier-Reinio \at
  Department of Applied Mathematics, University of Waterloo,
  Waterloo, ON N2L~3G1, Canada \\
  \email{ambaierreinio@uwaterloo.ca} \\
  ORCID:~0000-0002-0542-4342
  \and
  Sander~Rhebergen \at
  Department of Applied Mathematics, University of Waterloo,
  Waterloo, ON N2L~3G1, Canada \\
  \email{srheberg@uwaterloo.ca} \\
  ORCID:~0000-0001-6036-0356
  \and
  Garth~N.~Wells \at
  Department of Engineering, University of Cambridge,
  Trumpington Street, Cambridge CB2 1PZ, United Kingdom \\
  \email{gnw20@cam.ac.uk} \\
  ORCID:~0000-0001-5291-7951
}
%------------------------------------------------------------------------------
\date{Received: date / Accepted: date}
%------------------------------------------------------------------------------
\maketitle
%------------------------------------------------------------------------------
\begin{abstract}
  We present analysis of two lowest-order hybridizable discontinuous
  Galerkin methods for the Stokes problem, while making only minimal
  regularity assumptions on the exact solution.  The methods under
  consideration have previously been shown to produce
  $H(\textrm{div})$-conforming and divergence-free approximate
  velocities.  Using these properties, we derive a priori error
  estimates for the velocity that are independent of the pressure. These
  error estimates, which assume only $H^{1+s}$-regularity of the exact
  velocity fields for any $s \in [0, 1]$, are optimal in a discrete
  energy norm.  Error estimates for the velocity and pressure in the
  $L^2$-norm are also derived in this minimal regularity setting.  Our
  theoretical findings are supported by numerical computations.
\end{abstract}
%------------------------------------------------------------------------------
\keywords{Stokes equations \and Minimal regularity \and Embedded \and
  Hybridized \and Discontinuous Galerkin finite element methods \and
  Pressure-robust}
%------------------------------------------------------------------------------
\subclass{
  65N12 \and % Stability and convergence of numerical methods
  65N15 \and % Error bounds
  65N30 \and % Finite elements, Rayleigh-Ritz and Galerkin methods, finite methods
  76D07      % Stokes and related (Oseen, etc.) flows
  }
%------------------------------------------------------------------------------
\section{Introduction}
\label{sec:introduction}

Finite element methods for incompressible flows that are
\emph{pressure-robust} have become increasingly popular. Such methods
produce approximate velocity fields for which the \emph{a priori}
velocity error estimates are independent of the pressure approximation.
Numerous classical inf-sup stable finite elements, such as the MINI
element \cite{arnold1984stable} and Bernardi--Raugel elements
\cite{bernardi1985analysis}, are not  pressure-robust
\cite{john2017divergence}, with the velocity error polluted by the
pressure approximation error scaled by the inverse of the viscosity,
which can be large if the pressure is complicated or the viscosity is
small.

One way of achieving pressure-robustness is by stable mixed methods with
$H(\textrm{div})$-conforming and divergence-free approximate velocities
\cite{john2017divergence}.  Methods with these properties may relax
$H^1$-conformity and use discontinuous velocity approximations, as
constructing $H^1$-conforming and inf-sup stable schemes that are also
divergence-free is difficult \cite{john2017divergence}.  For this
reason, discontinuous Galerkin (DG) methods seem to be a natural
candidate for the construction of pressure-robust schemes.  Several
classes of pressure-robust DG methods that produce
$H(\textrm{div})$-conforming and divergence-free approximate velocities
were introduced in \cite{cockburn2007note,wang2007new}.

A drawback of DG methods is that they are, on a given mesh, typically
computationally more expensive than standard conforming methods.
Hybridized discontinuous Galerkin (HDG) methods were introduced to
improve upon the computational efficiency of DG methods while retaining
their desirable properties \cite{cockburn2009unified}. This is
accomplished by introducing extra degrees of freedom on cell facets
which allow for local cell-wise variables to be eliminated by static
condensation.  Examples of $H(\textrm{div})$-conforming and
divergence-free HDG methods are given in
\cite{cockburn2014divergence,rhebergen2017analysis,rhebergen2020embedded}
for the Stokes problem and in
\cite{fu2019explicit,lehrenfeld2016high,rhebergen2018hybridizable} for
the Navier--Stokes problem.

In this paper we study two closely related lowest-order hybridizable
DG methods for the velocity-pressure formulation of the Stokes
problem.  Both methods produce $H(\textrm{div})$-conforming and
divergence-free approximate velocities, and are therefore
pressure-robust.  The first method is the lowest-order HDG method
analyzed in \cite{rhebergen2017analysis,rhebergen2020embedded}.  The
velocity finite element space for this method consists of
discontinuous piecewise linear functions on cells and facets.  As
discussed in \cite{rhebergen2020embedded}, the computational cost of
this method can be reduced, while maintaining pressure-robustness, by
using a continuous basis for the velocity facet space.  Such an
approach is reminiscent of embedded discontinuous Galerkin (EDG)
methods \cite{guzey2007embedded}.  This leads to the EDG--HDG method of
\cite{rhebergen2020embedded}, the lowest-order formulation of which is
the second method considered in this paper.

In \cite{rhebergen2020embedded}, optimal pressure-robust velocity error
estimates were obtained for the HDG and EDG--HDG methods, assuming
$H^2$-regularity of the exact velocity solution.  Moreover, numerical
experiments in \cite{rhebergen2020embedded} suggest that even when
$H^2$-regularity of the exact velocity solution fails to hold, the
methods remain convergent.  Because of their computational efficiency,
the lowest-order HDG and EDG--HDG methods are appealing for problems
with  minimal regularity.  However, error analysis for this minimal
regularity case has not been developed.  This paper closes this gap by
presenting analysis of these lowest-order methods under
$H^{1+s}$-regularity of the exact velocity solution for any real number
$s \in [0, 1]$.

To put the error analysis of this paper into a broader context, we
review briefly the literature relevant to our analysis.  Numerous
classes of non-pressure-robust nonconforming methods for the Stokes
problem were analyzed under minimal regularity assumptions in
\cite{badia2014error,li2014new}.  Key to the analysis of
\cite{badia2014error,li2014new} is a so-called \emph{enrichment
operator} that maps nonconforming discrete functions to
$H^1$-conforming functions. More recently, by using enrichment
operators that map discretely divergence-free functions to exactly
divergence-free ones, this minimal regularity analysis has been extended to
pressure-robust schemes. This is done in the works of
\cite{moment_div_preserving,linke2019pressure,linke2018quasi,verfurth2019quasi},
which establish quasi-optimal and pressure-robust error estimates for
various finite element methods that achieve pressure-robustness by
modifying the source term in the discrete formulation.
In \cite{linke2018quasi,verfurth2019quasi} modified
Crouzeix--Raviart methods are considered, while \cite{linke2019pressure}
focuses on modified conforming methods and \cite{moment_div_preserving}
on modified DG methods.  Finally, a variety of conforming and
nonconforming pressure-robust methods based on an augmented Lagrangian
formulation have been proposed and analyzed under minimal regularity
assumptions in \cite{kreuzer2020quasi}.

The main contributions of this paper are as follows.  First, we derive
a bound on the consistency error of the lowest-order HDG and EDG--HDG
methods, by means of an enrichment operator of the type considered in
\cite{moment_div_preserving}.  A consequence of the hybridized formulation 
is that our consistency error bound contains a new term not found in previous 
works.  However, we show that it is still possible to obtain optimal 
pressure-robust velocity error estimates in a discrete energy norm.  
Pressure-robust velocity error estimates in the $L^2$-norm are also derived, 
and we conclude our analysis by deriving $L^2$-error bounds for the pressure.

The remainder of this paper is organized as follows.  In
\cref{sec:preliminaries} we introduce the Stokes problem and the methods
to be analyzed, and discuss some preliminary results.  The main analysis
is carried out in \cref{sec:error_analysis}, where we derive our error
estimates for the velocity and the pressure.  In
\cref{sec:numerical_examples} our theoretical findings are illustrated
by numerical examples, and the paper ends with conclusions
in~\cref{sec:conclusions}.

%------------------------------------------------------------------------------
\section{Preliminaries}
\label{sec:preliminaries}

Let $\Omega \subset \mathbb{R}^d$ with $d \in \{2, 3\}$ be a connected
and bounded domain with polyhedral boundary $\partial \Omega$.  The codimension
of $\partial \Omega$ is assumed to be one, but we do not require that $\Omega$
be a Lipschitz domain.  In particular, domains with cracks are allowed.  On a
given set $D \subset \Omega$, we let $(\cdot, \cdot)_D$ denote the
$L^2$-inner-product on $D$ and $\norm{\cdot}_D$ the $L^2$-norm on $D$.  Given
an integer $k \geq 0$, we let $\norm{\cdot}_{k,D}$ and $\abs{\cdot}_{k,D}$
denote the usual $H^k$-norm and $H^k$-semi-norm on $D$, respectively.  If $k >
0$ is not an integer, we let $\norm{\cdot}_{k,D}$ denote the fractional order
$H^k$-norm on $D$ as defined in \cite{dauge_stokesreg1,dauge_stokesreg2}.  In
the following we omit the subscript $D$ in the case of $D = \Omega$.

%------------------------------------------------------------------------------
\subsection{Stokes problem}
\label{subsec:stokes}

Let $f \in L^2(\Omega)^d$ be a prescribed body force and $\nu > 0$ a
given constant kinematic viscosity.  The Stokes problem seeks a
velocity field $u \in H_0^1(\Omega)^d$ and pressure field
$p \in L_0^2(\Omega) 
:= \big\{ q \in L^2(\Omega) : \int_{\Omega} q \dif{x} = 0 \big\}$ 
such that
\begin{subequations}
  \label{eq:stokes_wf}
  \begin{alignat}{2}
    \nu a(u, v) + b(v, p) &= (f, v) \qquad &&\fra v \in H_0^1(\Omega)^d, \\
    b(u, q) &= 0 \qquad &&\fra q \in L_0^2(\Omega),
  \end{alignat}
\end{subequations}
where $a : H_0^1(\Omega)^d \times H_0^1(\Omega)^d \rightarrow \mathbb{R}$
and
$b : H_0^1(\Omega)^d \times L_0^2(\Omega) \rightarrow \mathbb{R}$ 
are the bilinear forms
\begin{equation*}
  a(w, v) := (\nabla w, \nabla v), \qquad
  b(v, q) := -(\nabla \cdot v, q).
\end{equation*}
It is known that \cref{eq:stokes_wf} is well-posed, see
e.g.~\cite[Chapter~4]{ern2013theory}. Furthermore, within the reduced
space $V := \{v \in H_0^1(\Omega)^d : \nabla \cdot v = 0\}$ of divergence-free
functions, the velocity $u \in V$ equivalently satisfies the reduced problem
\begin{equation}
  \label{eq:stokes_wf_reduced}
  \nu a(u, v) = (f, v) \qquad \fra v \in V.
\end{equation}

We introduce the space of weakly divergence-free vector fields with
vanishing normal component on $\partial \Omega$,
\begin{equation}
  L_{\sigma}^2(\Omega) :=
  \{ w \in L^2(\Omega)^d : (w, \nabla \psi) = 0 \ \fra \psi \in H^1(\Omega)
  \},
\label{eqn:Lsigma}
\end{equation}
and note that every vector field $f \in L^2(\Omega)^d$ admits a unique
Helmholtz decomposition~\cite[Theorem~2.1]{linke2018quasi}
\begin{equation*}
  f = \nabla \phi + \mathbb{P}f,
\end{equation*}
where $\phi \in H^1(\Omega) / \mathbb{R}$ and $\mathbb{P}f \in
L_{\sigma}^2(\Omega)$.

The vector field $\mathbb{P}f$ is called the Helmholtz projection of $f$, see
e.g.~\cite[Section~2]{john2017divergence}.  We note that the reduced problem in
\cref{eq:stokes_wf_reduced} is equivalent to
\begin{equation}
  \label{eq:stokes_wf_reduced_helmholtz}
  \nu a(u, v) = (\mathbb{P} f, v) \qquad \fra v \in V,
\end{equation}
since for all $v \in V$ it holds that $(f, v) = (\mathbb{P} f, v)$. In
particular, the velocity solution $u$ is determined only by the Helmholtz
projection $\mathbb{P} f$ of the body force.
The presence of $\mathbb{P} f$ in 
\cref{eq:stokes_wf_reduced_helmholtz}
will turn out to play an important role in the pressure-robustness
of our error estimates in \Cref{sec:error_analysis}, 
and we discuss why this is the case in \Cref{rmk:pr_data_osc}.

%------------------------------------------------------------------------------
\subsection{Mesh-related notation}
\label{subsec:mesh}

Let $\mathcal{T} = \{K\}$ be a conforming triangulation of $\Omega$ into
simplices $\{K\}$.  Let $K \in \mathcal{T}$. We use $\mathcal{F}_K$ to indicate
the collection of $(d-1)$-dimensional faces of $K$. We set
$h_K=\textrm{diam}(K)$ and let $n_K$ denote the outward unit normal on
$\partial K$.  The mesh size is defined as $h := \max_{K \in \mathcal{T}} h_K$
and the mesh skeleton is defined as
$\Gamma_0 = \bigcup_{K \in \mathcal{T}} \partial K$.

Notice that, if cracks are present in the domain, it is possible for two
distinct elements $K_1, K_2 \in \mathcal{T}$ to share a face $\sigma \in
\mathcal{F}_{K_1} \cap \mathcal{F}_{K_2}$ that lies on the boundary, i.e.
$\sigma \subset \partial \Omega$.  In this case, it will not be convenient to
view $\sigma$ as a single mesh face, as is typically done for interior faces.
Following \cite{veeser2017quasi_nonlipschitz}, we therefore define the
collection of mesh faces as the quotient set
\begin{align*}
	&\mathcal{F}_h :=
	\big\{(\sigma, K) : K \in \mathcal{T}, \ \sigma \in \mathcal{F}_K \big\}
	/{\sim}, \\
	(\sigma_1, K_1) \sim (\sigma_2, K_2) &\iff
	\big[ (\sigma_1, K_1) = (\sigma_2, K_2) \big] \textrm{ or }
	\big[\sigma_1 = \sigma_2 \textrm{ and }
	\sigma_1 \nsubseteq \partial \Omega \big].
\end{align*}
For $F = [(\sigma, K)] \in \mathcal{F}_h$ we set $h_F := \textrm{diam}(\sigma)$.
Also, surface integration on $F$ is well-defined, with the understanding that
$\int_F v \dif{s} := \int_\sigma v \dif{s}$ for all $v \in L^1(\sigma)$.  The
boundary faces $\mathcal{F}_b$ and interior faces $\mathcal{F}_i$ are naturally
defined as
\begin{equation*}
	\mathcal{F}_b :=
	\{ [(\sigma, K)] \in \mathcal{F}_h : \sigma \subset \partial \Omega \},
	\qquad \mathcal{F}_i := \mathcal{F}_h \setminus \mathcal{F}_b,
\end{equation*}
and we note that
$\partial \Omega = \bigcup_{[(\sigma, K)] \in \mathcal{F}_b} \sigma$
since the codimension of $\partial \Omega$ is one.

If $F \in \mathcal{F}_i$ is an interior face belonging to two distinct elements
$K_1, K_2 \in \mathcal{T}$, we let $n_F$ denote the unit normal on $F$ pointing
from $K_1$ to $K_2$, and we define on $F$ the jump operator $\jump{\cdot}$ and
average operator $\av{\cdot}$ in the usual way:
\begin{alignat}{2}
  \jump{\phi}|_F &:= \phi|_{K_1} - \phi|_{K_2},
  \label{eq:jump_def_interior} \\
  \av{\phi}|_F &:= \frac{1}{2}(\phi|_{K_1} + \phi|_{K_2}),
  \label{eq:av_def_interior}
\end{alignat}
where $\phi$ is any function defined piecewise on $K_1 \cup K_2$. The
ambiguity in the ordering of $K_1, K_2$ will be unimportant.  If $F \in
\mathcal{F}_b$ is a boundary face belonging to $K \in \mathcal{T}$, we
let $n_F$ denote the unit normal on $F$ outward to $K$, and we define on
$F$ the jump and average operators as
\begin{equation}
  \label{eq:jump_av_def_boundary}
  \jump{\phi}|_F = \av{\phi}|_F := \phi|_{K},
\end{equation}
where $\phi$ is any function defined on $K$.

Finally, the following definition will be used in \cref{sec:appendix_a}.
Let $K \in \mathcal{T}$.  Observe that we do not have $\mathcal{F}_K
\subset \mathcal{F}_h$ because of how $\mathcal{F}_h$ is defined using
equivalence classes.  We therefore define $\mathcal{F}_{K,h} := \{
[(\sigma, K)] \in \mathcal{F}_h : \sigma \in \mathcal{F}_K \}$ so that
$\mathcal{F}_{K,h} \subset \mathcal{F}_h$ holds. The sets
$\mathcal{F}_K$ and $\mathcal{F}_{K,h}$ intuitively encode the same
information; they both contain exactly $(d+1)$ elements
which describe the faces of $K$.  The only difference is that
$\mathcal{F}_{K,h}$ is defined using equivalence classes in
$\mathcal{F}_h$.

%------------------------------------------------------------------------------
\subsection{Discrete finite element spaces and norms}
\label{subsec:discrete}

We introduce the following low-order discontinuous finite element
spaces on $\Omega$:
\begin{align*}
  X_h &:=
	\{v_h \in L^2(\Omega)^d :
	v_h|_K \in [\mathcal{P}_1(K)]^d \ \forall K \in \mathcal{T} \}, \\
  Q_h &:=
	\{q_h \in L_0^2(\Omega) :
	q_h|_K \in \mathcal{P}_0(K) \ \forall K \in \mathcal{T}\},
\end{align*}
where $\mathcal{P}_k(D)$ is the space of polynomials with degree at most
$k$ on $D$.  Also, let $\mathcal{P}_1(\mathcal{F}_h) := \prod_{F \in
\mathcal{F}_h} \mathcal{P}_1(F)$.
We introduce the low-order
discontinuous facet finite element spaces
\begin{align*}
  \bar{X}_h &:=
              \{\bar{v}_h \in [\mathcal{P}_1(\mathcal{F}_h)]^d :
              \bar{v}_h|_F = 0 \ \forall F \in
              \mathcal{F}_b \}, \\
  \bar{Q}_h &:= \mathcal{P}_1(\mathcal{F}_h).
\end{align*}
Notice that $\bar{X}_h$ can be viewed as the space of discontinuous
piecewise-linear vector functions on $\Gamma_0$ that vanish on $\partial
\Omega$.  Likewise, $\bar{Q}_h$ can be viewed as the space of
discontinuous piecewise-linear scalar functions on $\Gamma_0$, with the
caveat that these functions are double-valued on boundary faces shared
by two distinct cells.

It will also be convenient to introduce the extended velocity spaces
\begin{subequations} \label{eq:extended_velocity_spaces_def}
\begin{align} 
	X(h) &:= X_h + H_0^1(\Omega)^d,
	\\
	\bar{X}(h) &:= \bar{X}_h + H_0^{1/2}(\Gamma_0)^d,
\end{align}
\end{subequations}
where $H_0^{1/2}(\Gamma_0)^d$ is the trace space of functions in 
$H_0^1(\Omega)^d$ restricted to $\Gamma_0$.  
We use boldface notation for function pairs in 
$X(h) \times \bar{X}(h)$ and $Q_h \times \bar{Q}_h$,
i.e. 
\begin{equation*}
	\bm{v} = (v, \bar{v}) \in X(h) \times \bar{X}(h) 
	\quad \textrm{ and } \quad
	\bm{q}_h = (q_h, \bar{q}_h) \in Q_h \times \bar{Q}_h.
\end{equation*}

Throughout this paper 
$\nabla_h : X(h) \rightarrow [L^2(\Omega)]^{d \times d}$ 
denotes the broken gradient
$(\nabla_h v)|_K := \nabla (v|_K)$.  On the space $X(h)$ we introduce
the discrete $H^1$-norm
\begin{equation*}
  \dgnorm{v}^2 := \norm{\nabla_h v}^2 + \jsnorm{v}^2,
\end{equation*}
where $\jsnorm{\cdot}$ is the following jump semi-norm on $X(h)$:
\begin{equation*}
  \jsnorm{v}^2 := \sum_{F \in \mathcal{F}_h} \frac{1}{h_F} \norm{\jump{v}}_F^2.
\end{equation*}
Similarly, on the product space $X(h) \times \bar{X}(h)$ we introduce
the discrete $H^1$-norm
\begin{equation*}
  \vnorm{\bm{v}}^2 := \norm{\nabla_h v}^2 + \fsnorm{\bm{v}}^2,
\end{equation*}
where $\fsnorm{\cdot}$ is the following facet semi-norm on
$X(h) \times \bar{X}(h)$:
\begin{equation*}
  \fsnorm{\bm{v}}^2 := \sum_{K \in \mathcal{T}} \frac{1}{h_K} \norm{v - \bar{v}}_{\partial K}^2.
\end{equation*}
Finally, on the space $Q_h \times \bar{Q}_h$ we introduce the norm
\begin{equation*}
  \pnorm{\bm{q}_h}^2 := \norm{q_h}^2 + \pfnorm{\bar{q}_h}^2,
\end{equation*}
where $\pfnorm{\cdot}$ is the following norm on $\bar{Q}_h$:
\begin{equation*}
  \pfnorm{\bar{q}_h}^2 := \sum_{K \in \mathcal{T}} h_K \norm{\bar{q}_h}_{\partial K}^2.
\end{equation*}

We use $a \lesssim b$ to indicate $a \leq Cb$ where $C$ is a positive
constant depending only on $d, \Omega$ and the shape-regularity of
$\mathcal{T}$.  On occasion we will use inequalities of the form $a \leq C(s)
b$, where $C(s)$ is a positive constant depending only on $d, \Omega$,
shape-regularity of $\mathcal{T}$ and $s$, where $s \in [0, 1]$
corresponds to the order of the fractional Sobolev space
$H^{1+s}(\Omega)^d$.  In these cases, we will use the notation~$a
\lesssim_s b$.

We conclude this subsection with the observation that
\begin{equation}
  \label{eq:js_fs_semi-norm_ineq}
  \jsnorm{v} \lesssim \fsnorm{\bm{v}},
\end{equation}
which follows from the triangle inequality. Note that
\cref{eq:js_fs_semi-norm_ineq} implies
\begin{equation*}
  \dgnorm{v} \lesssim \vnorm{\bm{v}} .
\end{equation*}
These inequalities will be used frequently in \cref{sec:error_analysis}.

%------------------------------------------------------------------------------
\subsection{The hybridized and embedded–hybridized discontinuous
  Galerkin methods}
\label{subsec:methods}

The lowest-order HDG and EDG--HDG methods analyzed in
\cite{rhebergen2020embedded} utilize the following finite element
spaces:
\begin{subequations}
	\begin{align}
		\bm{X}_h^v &:=
		\begin{cases}
			X_h \times \bar{X}_h & \quad \textrm{(HDG method)}, \\
			X_h \times (\bar{X}_h \cap C^0(\Gamma_0)^d) & \quad
			\textrm{(EDG--HDG method)},
		\end{cases} \label{eq:discrete_v_space_def} \\
		\bm{Q}_h^p &:= Q_h \times \bar{Q}_h.
	\end{align}
\end{subequations}
The HDG and HDG--EDG methods differ only in their choice of velocity
facet space, which is discontinuous for the HDG method and continuous
for the EDG--HDG method.  The remainder of the analysis is agnostic as to
whether the HDG or EDG--HDG method is considered, with the presented
analysis holding for both methods.

The discrete formulation of \cref{eq:stokes_wf} seeks
$(\bm{u}_h, \bm{p}_h) \in \bm{X}_h^v \times \bm{Q}_h^p$ such that
\begin{subequations}
  \label{eq:stokes_wf_discrete}
  \begin{alignat}{2}
    \nu a_h(\bm{u}_h, \bm{v}_h) + b_h(v_h, \bm{p}_h) &= (f, v_h)
    \qquad &&\fra \bm{v}_h \in \bm{X}_h^v, \\
    b_h(u_h, \bm{q}_h) &= 0 \qquad &&\fra \bm{q}_h \in \bm{Q}_h^p,
  \end{alignat}
\end{subequations}
where $a_h : \bm{X}_h^v \times \bm{X}_h^v \rightarrow \mathbb{R}$ and
$b_h : X_h \times \bm{Q}_h^p \rightarrow \mathbb{R}$ are the bilinear
forms
\begin{alignat}{2}
  \begin{split}
    \label{eq:a_h_def1}
    a_h(\bm{v}, \bm{w}) :=
    &\sum_{K \in \mathcal{T}}
    \int_K \nabla v : \nabla w \dif x
    + \sum_{K \in \mathcal{T}}
    \frac{\alpha}{h_K} \int_{\partial K}  (v - \bar{v}) \cdot (w - \bar{w}) \dif s
    \\
    &- \sum_{K \in \mathcal{T}}
    \int_{\partial K}
    \Big[
    (v - \bar{v}) \cdot \frac{\partial w}{\partial n_K}
    + (w - \bar{w}) \cdot \frac{\partial v}{\partial n_K}
    \Big] \dif s,
  \end{split}
  \\
  b_h(v, \bm{q})
  := &-\sum_{K \in \mathcal{T}} \int_K (\nabla \cdot v) q \dif x
  + \sum_{K \in \mathcal{T}} \int_{\partial K} (v \cdot n_K) \bar{q} \dif s,
  \label{eq:b_h_def}
\end{alignat}
and $\alpha > 0$ is a penalty parameter.  It was shown in
\cite[Lemma~4.2]{rhebergen2017analysis} that for sufficiently large
$\alpha$ the following coercivity result holds:
\begin{equation}
  \label{eq:a_h_coercive}
  \vnorm{\bm{v}_h}^2 \lesssim a_h(\bm{v}_h, \bm{v}_h) \qquad \fra \bm{v}_h \in
  \bm{X}_h^v.
\end{equation}
Let us also mention that inf-sup stability of $b_h$ was established in
\cite[Lemma~8]{rhebergen2020embedded}:
\begin{equation}
  \label{eq:b_h_infsup}
  \pnorm{\bm{q}_h} \lesssim
  \sup_{\bm{v}_h \in \bm{X}_h^v \backslash \{\bm{0}\}}
  \frac{b_h(v_h, \bm{q}_h)}{\vnorm{\bm{v}_h}}
  \qquad \forall \bm{q}_h \in \bm{Q}_h^p.
\end{equation}

A consequence of the stability properties in
\cref{eq:a_h_coercive,eq:b_h_infsup} is that the discrete problem
\cref{eq:stokes_wf_discrete} is well-posed, see
e.g.~\cite[Chapter~4]{boffi2013mixed}.  Furthermore, let us introduce
the discrete reduced space
\begin{align*}
  \bm{V}_h^v &:= \{\bm{v}_h \in \bm{X}_h^v : b_h(v_h, \bm{q}_h) = 0 \ \forall
  \bm{q}_h \in \bm{Q}_h^p \}
  \\
             &= \{\bm{v}_h \in \bm{X}_h^v : v_h \in X_h^{\mathrm{BDM}} \
             \textrm{and} \ \nabla \cdot v_h = 0 \},
\end{align*}
where $X_h^{\mathrm{BDM}}$ is the lowest-order Brezzi--Douglas--Marini (BDM)
space \cite{boffi2013mixed},
\begin{equation*}
  X_h^{\mathrm{BDM}}
  = \{v_h \in X_h : \jump{v_h}|_F \cdot n_F = 0
  \ \forall F \in \mathcal{F}_h \}.
\end{equation*}
Inf-sup stability of $b_h$ implies the best approximation result
\cite[Section 12.5]{brenner2008mathematical}
\begin{equation} \label{eq:reduced_vs_full_approx}
	\inf_{\tilde{\bm{v}}_h \in \bm{V}_h^v} \vnorm{\bm{u} - \tilde{\bm{v}}_h}
	\lesssim
	\inf_{\bm{v}_h \in \bm{X}_h^v} \vnorm{\bm{u} - \bm{v}_h},
\end{equation}
where $u \in H_0^1(\Omega)^d$ is the velocity solution to \cref{eq:stokes_wf}
and $\bm{u} = (u, u) \in X(h) \times \bar{X}(h)$.
Also, the discrete velocity solution $\bm{u}_h \in \bm{V}_h^v$ to
\cref{eq:stokes_wf_discrete} satisfies the discrete reduced problem
\begin{equation}
  \label{eq:stokes_wf_reduced_discrete}
  \nu a_h(\bm{u}_h, \bm{v}_h) = (f, v_h)
  \qquad \fra \bm{v}_h \in \bm{V}_h^v.
\end{equation}
However, for $\bm{v}_h \in \bm{V}_h^v$ it holds that $v_h \in
L_{\sigma}^2(\Omega)$ (recall that $L_{\sigma}^2(\Omega)$ is defined in
\cref{eqn:Lsigma}) and therefore $(f, v_h) = (\mathbb{P} f, v_h)$.
Hence the reduced problem \cref{eq:stokes_wf_reduced_discrete} can
equivalently be written as
\begin{equation}
  \label{eq:stokes_wf_reduced_discrete_helmholtz}
  \nu a_h(\bm{u}_h, \bm{v}_h) = (\mathbb{P} f, v_h)
  \qquad \fra \bm{v}_h \in \bm{V}_h^v.
\end{equation}
Analogously to \cref{eq:stokes_wf_reduced_helmholtz},
the presence of $\mathbb{P} f$ in 
\cref{eq:stokes_wf_reduced_discrete_helmholtz}
will play an important role in the pressure-robustness of our error estimates
in \Cref{sec:error_analysis}.

%------------------------------------------------------------------------------
\subsection{Enrichment and interpolation operators}
\label{subsec:enrichment}

Our minimal regularity error analysis of the lowest-order HDG and
EDG--HDG methods will utilize an \emph{enrichment operator} $E_h$ with
the following properties.
\begin{lemma}[Enrichment operator]
  \label{lemma:enrichment}
  There exists a linear operator \\
  $E_h : X_h^{\mathrm{BDM}} \rightarrow H_0^1(\Omega)^d$ such that for all
  $v_h \in X_h^{\mathrm{BDM}}$ we have
  \begin{enumerate}[(i)]
    \setlength\itemsep{0.5em}
  \item $\int_F \av{v_h} \dif s = \int_F E_h v_h \dif s$ for all $F \in \mathcal{F}_i$.
  \label{itm:e_h_facet_mean}
  \item $\nabla \cdot v_h = \nabla \cdot E_h v_h$.
  \label{itm:e_h_div}
  \item $\sum_{K \in \mathcal{T}} h_K^{2(k-1)} \abs{v_h - E_h v_h}_{k,
  K}^2 \lesssim \jsnorm{v_h}^2$ for all $k \in \{0, 1\}$.
    \label{itm:e_h_interp}
  \item $\norm{\nabla E_h v_h} = \dgnorm{E_h v_h} \lesssim \dgnorm{v_h}$.
  \label{itm:e_h_bounded}
  \end{enumerate}
\end{lemma}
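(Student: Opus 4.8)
The plan is to construct $E_h$ explicitly as a \emph{moment- and divergence-preserving conforming companion} of the inclusion $X_h^{\mathrm{BDM}} \hookrightarrow L^2(\Omega)^d$, following the enrichment operators of \cite{moment_div_preserving}. I would fix a conforming target space $S_h \subset H_0^1(\Omega)^d$ of continuous piecewise polynomials, enriched by face- and element-bubbles so that its degrees of freedom control, in a localized way, the vertex values, the face means together with the higher normal-trace moments on faces, and the interior moments against gradients $\nabla q$. The operator is then defined in two stages. First, a Brenner/Oswald-type averaging $\Pi_h v_h \in S_h$ is obtained by replacing the (possibly multivalued) vertex functionals of $v_h$ by their element-averages at each shared vertex and zeroing every functional attached to $\partial\Omega$; this secures $C^0$-continuity, the homogeneous boundary values $E_h v_h|_{\partial\Omega}=0$, and the conforming approximation. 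Second, the bubble degrees of freedom are fixed to enforce the facet-mean identity~(i) on interior faces, to match the higher moments of the normal trace $\int_F (E_h v_h \cdot n_F)\,\mu\dif s = \int_F (v_h\cdot n_F)\,\mu \dif s$ on each face, and to match the interior moments $\int_K E_h v_h \cdot \nabla q \dif x = \int_K v_h \cdot \nabla q \dif x$. Each corrector is supported on the patch of its face or cell, so the defining systems decouple and are uniquely solvable.

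Property~(ii) then follows from these moment identities by the divergence theorem applied cell-by-cell. Because $v_h \in X_h^{\mathrm{BDM}}$ has continuous normal trace and, by the definition of $X_h^{\mathrm{BDM}}$ together with the boundary-face convention $\jump{\phi}|_F = \phi|_K$, satisfies $v_h \cdot n_F = 0$ on every boundary face (matching $E_h v_h \cdot n_F = 0$ there), integration by parts gives, for every test polynomial $q$ up to the degree of $S_h$,
\[
  \int_K (\nabla\cdot E_h v_h)\,q \dif x - \int_K (\nabla\cdot v_h)\,q \dif x
  = \int_{\partial K} \big((E_h v_h - v_h)\cdot n_K\big)\,q \dif s
  - \int_K (E_h v_h - v_h)\cdot \nabla q \dif x,
\]
and both terms on the right-hand side vanish: the boundary integral by the matched normal-trace moments on interior faces and the vanishing normal components on boundary faces, and the volume integral by the matched interior moments. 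Since this holds for all such $q$ and $\nabla\cdot v_h \in \mathcal{P}_0(K)$, we conclude $\nabla\cdot E_h v_h = \nabla\cdot v_h$ on each $K$, which is~(ii).

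The approximation bounds~(iii) are standard once the construction is in place: on a reference patch the difference $v_h - E_h v_h$ depends linearly and boundedly only on the jumps of $v_h$ across the faces of the patch, since both the averaging in the first stage and the correctors in the second stage are built from these jumps. A scaling/Bramble--Hilbert argument then yields $\sum_{K}h_K^{2(k-1)}\abs{v_h - E_h v_h}_{k,K}^2 \lesssim \jsnorm{v_h}^2$ for $k\in\{0,1\}$. The boundedness~(iv) follows from~(iii) with $k=1$ and the triangle inequality, while the equality $\norm{\nabla E_h v_h}=\dgnorm{E_h v_h}$ holds because $E_h v_h \in H_0^1(\Omega)^d$ has vanishing jumps, so $\jsnorm{E_h v_h}=0$.

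I expect the main obstacle to be twofold. The technical heart is verifying that the second-stage correctors that repair the facet means and the divergence moments are themselves controlled by the local jump seminorm of $v_h$, so that stage two does not spoil~(iii); this reduces to a reference-patch estimate bounding each moment functional applied to $v_h - \Pi_h v_h$ by $\jsnorm{v_h}$ restricted to the patch. The second difficulty is geometric: since $\Omega$ is only assumed to have polyhedral boundary of codimension one and may contain cracks, the first-stage averaging must respect the quotient structure of $\mathcal{F}_h$, treating a crack face as two boundary faces so that $E_h v_h$ is allowed to be discontinuous across the crack while still lying in $H_0^1(\Omega)^d$. Guaranteeing that the patch estimates remain uniform in the shape-regularity near such boundary configurations is where the non-Lipschitz quasi-interpolation framework of \cite{veeser2017quasi_nonlipschitz} is needed.
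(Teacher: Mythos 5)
Your construction is correct in outline, but it takes a genuinely different route from the paper's own proof in \cref{sec:appendix_a}. You build $E_h$ in two stages---an Oswald-type averaging into a bubble-enriched conforming space, followed by face- and element-bubble correctors that restore the face means, the higher-order normal-trace moments, and the interior moments against gradients---and then obtain exact divergence preservation by elementwise integration by parts and a duality argument in $\mathcal{P}_{k-1}(K)$. This is essentially the strategy of the cited reference \cite{moment_div_preserving}, and your two flagged obstacles (stability of the correctors in terms of local jumps, and the crack geometry) are precisely where that construction's work lies. The paper instead defines $E_h$ in a single shot as a nodal interpolant into the local Guzm\'an--Neilan space $V(K)$ of \cite{guzman_neilan_3d_stokes}, prescribing its degrees of freedom (vertex values, edge moments, face means) by averages of $v_h$ and zeroing them on the boundary. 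The decisive structural difference is the inclusion $\nabla \cdot V(K) \subset \mathcal{P}_0(K)$: since both $\nabla \cdot E_h v_h$ and $\nabla \cdot v_h$ are then piecewise constant, testing against piecewise constants suffices, and \cref{itm:e_h_div} follows from the face-mean property \cref{itm:e_h_facet_mean} together with normal continuity of $v_h$ alone---no higher-order face moments and no interior moments need to be matched at all. This collapses your main technical burden: there are no second-stage correctors whose stability must be tracked, and \cref{itm:e_h_interp} reduces to a single scaling argument on the Guzm\'an--Neilan degrees of freedom, following \cite[Lemma~4.7]{linke2018quasi}. What your approach buys in exchange is independence from that specialized element: it uses only standard bubble enrichments, works uniformly in two and three dimensions, and extends naturally to higher polynomial degree, whereas the paper's argument leans on the nontrivial Guzm\'an--Neilan construction (with \cite{guzman2014conforming} needed to adapt it to $d=2$). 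Both arguments handle the crack/codimension-one geometry identically: every boundary vertex and edge lies in some boundary face, so the zeroed boundary functionals are controlled by the boundary-face contributions to $\jsnorm{v_h}$, exactly as you anticipate in your final paragraph.
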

An operator satisfying the statements in \cref{lemma:enrichment} was
constructed in \cite{moment_div_preserving}. In
\cite{moment_div_preserving} the construction is outlined in detail for
the two-dimensional case, but sketched only briefly for the
three-dimensional case.  We present an alternative proof of
\cref{lemma:enrichment} for the three-dimensional case in
\cref{sec:appendix_a}.  Our construction is based on the conforming and
divergence-free finite element of \cite{guzman_neilan_3d_stokes} and is
inspired by \cite[Lemma~4.7]{linke2018quasi}, in which a similar result
is established for Crouzeix--Raviart finite element functions.  We
mention in passing that our construction in \cref{sec:appendix_a} can
also be adapted to the two-dimensional case by using the two-dimensional
finite element of~\cite{guzman2014conforming}.

Let $X_h^c := \{v_h \in X_h \cap C^0(\overline{\Omega})^d :
v_h|_{\partial \Omega} = 0 \}$ 
be the conforming analogue of $X_h$. Aside from $E_h$, we
will also use the following quasi-interpolation operator $I_h$ to deduce
optimal rates of convergence for the HDG and EDG--HDG methods.
\begin{lemma}[Quasi-interpolation operator]
	\label{lemma:interp_op}
	There exists a linear operator $I_h : H_0^1(\Omega)^d \rightarrow X_h^c$
	such that for all  $s \in [0, 1]$ and 
	$v \in H_0^1(\Omega)^d \cap H^{1+s}(\Omega)^d$
	we have
	\begin{equation} \label{eq:inter_op_estimate}
		\norm{\nabla_h (v - I_h v)} \lesssim_s h^s \norm{v}_{1+s}.
	\end{equation}
\end{lemma}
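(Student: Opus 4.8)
The plan is to take $I_h$ to be a Scott--Zhang quasi-interpolation operator onto $X_h^c$. Recall that such an operator is defined on all of $H^1(\Omega)^d$, reproduces continuous piecewise-linear functions, is locally $H^1$-stable, and --- because its boundary degrees of freedom are defined through traces on boundary facets --- maps $H_0^1(\Omega)^d$ into $X_h^c$, preserving the homogeneous Dirichlet condition. Since for $v \in H_0^1(\Omega)^d$ both $v$ and $I_h v \in X_h^c \subset H_0^1(\Omega)^d$ are globally $H^1$, the broken gradient coincides with the true gradient on their difference, so $\norm{\nabla_h (v - I_h v)} = \abs{v - I_h v}_1$. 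It therefore suffices to prove the fully conforming estimate $\abs{v - I_h v}_1 \lesssim_s h^s \norm{v}_{1+s}$, and no broken-space machinery is needed.

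First I would prove the two integer endpoints $s \in \{0, 1\}$. For $s = 1$, a standard Bramble--Hilbert (Deny--Lions) argument on each element, using local polynomial reproduction and stability, gives $\abs{v - I_h v}_{1,K} \lesssim h_K \abs{v}_{2,\omega_K}$, where $\omega_K$ is the Scott--Zhang patch of $K$; squaring, summing over $K \in \mathcal{T}$, and invoking the finite overlap of the patches (a consequence of shape-regularity) yields $\abs{v - I_h v}_1 \lesssim h \abs{v}_2 \leq h \norm{v}_2$. For $s = 0$, the local $H^1$-stability of $I_h$ summed over the mesh gives $\abs{I_h v}_1 \lesssim \norm{v}_1$, and the triangle inequality then gives $\abs{v - I_h v}_1 \lesssim \norm{v}_1$. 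Both endpoint bounds hold for general $v \in H^1(\Omega)^d$, respectively $v \in H^2(\Omega)^d$, as they never use the boundary condition.

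To reach the fractional orders I would view the map $S_h : v \mapsto \nabla(v - I_h v)$, valued in $L^2(\Omega)^{d \times d}$, as a single linear operator and interpolate between the two endpoints just obtained, namely $\norm{S_h}_{H^1(\Omega)^d \to L^2} \lesssim 1$ and $\norm{S_h}_{H^2(\Omega)^d \to L^2} \lesssim h$. Using the real-interpolation identity $H^{1+s}(\Omega)^d = \big(H^1(\Omega)^d, H^2(\Omega)^d\big)_{s,2}$ with equivalent norms, the operator interpolation theorem gives $\norm{S_h}_{H^{1+s}(\Omega)^d \to L^2} \lesssim_s 1^{1-s} h^{s} = h^s$, that is, $\abs{v - I_h v}_1 \lesssim_s h^s \norm{v}_{1+s}$; the dependence of the constant on $s$ enters only through the interpolation constant. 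Because I interpolate on the full Sobolev spaces without imposing any zero-trace constraint and only restrict to $v \in H_0^1(\Omega)^d \cap H^{1+s}(\Omega)^d$ at the very end, no delicate identification of interpolation spaces carrying boundary conditions is needed.

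The main obstacle is the fractional-order case itself. For noninteger $s$ one cannot simply sum local fractional seminorms over the patches $\omega_K$, since the $H^{1+s}$-seminorm is nonlocal and its patchwise contributions need not control the global seminorm; this is precisely why the elementwise Bramble--Hilbert argument must be replaced by interpolation. The point demanding care is thus the interpolation-space identification: one must check that the fractional norm on $H^{1+s}(\Omega)^d$ adopted here, in the sense of \cite{dauge_stokesreg1,dauge_stokesreg2}, is equivalent, with an $s$-dependent constant, to the real-interpolation norm between $H^1(\Omega)^d$ and $H^2(\Omega)^d$. Since $\Omega$ is permitted to be non-Lipschitz (for instance cracked), I would either verify that this equivalence persists for the fractional-norm convention in force, or, as an alternative route, invoke a direct fractional-order Scott--Zhang estimate from the literature as a black box.
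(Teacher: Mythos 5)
Your endpoint estimates are fine, and they indeed hold on the domains considered here: the Scott--Zhang operator's local $H^1$-stability and its $O(h)$ accuracy under $H^2$-regularity need only shape-regularity of the mesh, and preservation of the homogeneous Dirichlet condition needs only that every boundary vertex lies in a boundary face, which the paper's codimension-one assumption on $\partial\Omega$ guarantees. The genuine gap is exactly the step you flag and then defer: the identification $H^{1+s}(\Omega)^d = \big(H^1(\Omega)^d, H^2(\Omega)^d\big)_{s,2}$ that your operator-interpolation argument requires. This is a theorem on Lipschitz domains, where it is proved via a Stein/Calder\'on extension operator; but the paper deliberately does \emph{not} assume $\Omega$ is Lipschitz (cracked domains are admitted), and a cracked domain is not a Sobolev extension domain. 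Worse, on a cracked domain the two spaces genuinely differ as sets when $s > 1/2$: a function that is smooth on each side of the crack, supported away from the crack tip, and whose gradient jumps across the crack belongs to $H^2(\Omega)^d$ and hence to the interpolation space, yet its intrinsic fractional seminorm $\abs{\nabla v}_{s}$ is infinite for $s \geq 1/2$, since the Slobodeckij double integral uses Euclidean distance and therefore ``sees'' across the crack. This shows the equivalence you need cannot be taken for granted; establishing the one embedding your argument actually uses, $\norm{v}_{(H^1,H^2)_{s,2}} \lesssim_s \norm{v}_{1+s}$ in the norm convention of the paper, is an unresolved substantive step, not a routine verification. Your fallback --- citing a fractional-order Scott--Zhang estimate from the literature --- does not close the gap either, because those results (Ern--Guermond, Ciarlet~Jr., etc.) are proved under the same Lipschitz hypothesis; indeed, avoiding that hypothesis is the entire reason the paper supplies its own proof rather than citing \cite{ern2017finitequasi,scott1990finite}.

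For contrast, the paper's proof is built to be insensitive to this issue by being completely local. It takes $I_h = A_h \circ \Pi_h$, where $\Pi_h$ is the elementwise $L^2$-projection onto the broken space $X_h$ and $A_h$ is the Karakashian--Pascal averaging operator into $X_h^c$. The error splits into the broken projection error plus $\jsnorm{\Pi_h v} = \jsnorm{v - \Pi_h v}$, and both are controlled by elementwise quantities $h_K^{-2}\norm{v - \Pi_h v}_K^2 + \abs{v - \Pi_h v}_{1,K}^2$, to which the fractional Bramble--Hilbert lemma of Dupont--Scott applies element by element; the resulting local terms $h_K^{2s}\norm{v}_{1+s,K}^2$ sum to at most $h^{2s}\norm{v}_{1+s}^2$ because squared fractional norms are superadditive over disjoint elements. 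No global interpolation-space theory enters, and the only global geometric input is again the codimension-one boundary (needed for the jump bound on $A_h$). If you want to salvage your route, you would have to prove the interpolation-space embedding for the Dauge-convention fractional norms on the class of domains admitted here; switching to the paper's local argument is considerably simpler.
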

\begin{proof}
A proof of \cref{lemma:interp_op} can be found in
\cite{ern2017finitequasi,scott1990finite}, although these works assume
that $\Omega$ is a Lipschitz domain. For the sake of completeness, we
now show that the quasi-interpolation operator of
\cite{ern2017finitequasi} still satisfies \cref{eq:inter_op_estimate}
when it is not assumed that $\Omega$ is Lipschitz. The
quasi-interpolation operator of \cite{ern2017finitequasi} is given by
$I_h = A_h \circ \Pi_h$, where $A_h : X_h \rightarrow X_h^c$ is the
lowest-order averaging operator introduced in
\cite[Theorem~2.2]{karakashian2003posteriori}
and $\Pi_h : H_0^1(\Omega)^d \rightarrow X_h$ 
the $L^2$-orthogonal projector onto $X_h$. 
Because we are assuming that $\partial \Omega$ has codimension
one, every boundary vertex of the mesh is contained in some boundary
face of the mesh.  As a result, by the arguments used in
\cite[Theorem~2.2]{karakashian2003posteriori}:
\begin{equation} \label{eq:av_op_property}
	\norm{\nabla_h(v_h - A_h v_h)} \lesssim \jsnorm{v_h}.
\end{equation}
Let $v \in H_0^1(\Omega)^d \cap H^{1+s}(\Omega)^d$. 
Using the triangle inequality,
\cref{eq:av_op_property}, and a continuous trace inequality
\cite[Lemma~1.49]{di2011mathematical}, we have
\begin{align} \label{eq:interp_op_estimate1}
	\begin{split}
		\norm{\nabla_h (v - I_h v)} &\leq
		\norm{\nabla_h (v - \Pi_h v)} + \norm{\nabla_h (\Pi_h v - A_h \Pi_h v)}
		\\
		&\lesssim
		\norm{\nabla_h (v - \Pi_h v)} + \jsnorm{\Pi_h v} \\
		&= \norm{\nabla_h (v - \Pi_h v)} + \jsnorm{v - \Pi_h v} \\
		&\lesssim
		\Big( \sum_{K \in \mathcal{T}}
		h_K^{-2} \norm{v - \Pi_h v}_{K}^2
		+ \abs{v - \Pi_h v}_{1, K}^2 \Big)^{1/2}.
	\end{split}
\end{align}
Finally, \cref{eq:inter_op_estimate} follows from
\cref{eq:interp_op_estimate1}
and standard approximation properties of the $L^2$-orthogonal projector
$\Pi_h$ (see e.g.~\cite[Section~1.4.4]{di2011mathematical}). \qed
\end{proof}

%------------------------------------------------------------------------------
\section{Pressure-robust error analysis under minimal regularity}
\label{sec:error_analysis}

In \cite{rhebergen2020embedded}, optimal and pressure-robust error
estimates for the HDG and EDG--HDG methods were derived assuming 
$u \in H_0^1(\Omega)^d \cap H^2(\Omega)^d$.  
In this section, we carry out error analysis
for the more general case of 
$u \in H_0^1(\Omega)^d \cap H^{1+s}(\Omega)^d$ 
for $s \in [0, 1]$.

%------------------------------------------------------------------------------
\subsection{Velocity error estimates}
\label{sec:velocity_error}

Thus far we have considered $a_h$ on the finite element space $\bm{X}_h^v$ 
(see \cref{eq:discrete_v_space_def}).
The first step in our analysis is to extend $a_h$ to the larger space
$\bm{X}^v(h) := X(h) \times \bar{X}(h)$ 
(see \cref{eq:extended_velocity_spaces_def}). The main difficulty is that for 
$v\in X(h)$ and $K \in \mathcal{T}$ we have only $\nabla v \in [L^2(K)]^{d
\times d}$ and therefore $\nabla v$ does not admit a well-defined trace
on $\partial K$.  To deal with this problem, let $\pi_K : [L^2(K)]^{d
\times d} \rightarrow [\mathcal{P}_0(K)]^{d \times d}$ denote the
$L^2$-orthogonal projector onto $[\mathcal{P}_0(K)]^{d \times d}$. Hence
$(G - \pi_K G, H)_K = 0$ for all $G \in [L^2(K)]^{d \times d}$ and $H
\in [\mathcal{P}_0(K)]^{d \times d}$.  For any $\bm{v}, \bm{w} \in
\bm{X}^v(h)$ we now define
\begin{alignat}{2}
  \begin{split} \label{eq:a_h_def2}
    a_h(\bm{v}, \bm{w}) :=
    &\sum_{K \in \mathcal{T}}
    \int_K \nabla v : \nabla w \dif x
    + \sum_{K \in \mathcal{T}}
    \frac{\alpha}{h_K} \int_{\partial K}  (v - \bar{v}) \cdot (w - \bar{w}) \dif s \\
    &- \sum_{K \in \mathcal{T}}
    \int_{\partial K}
    \Big[
    (v - \bar{v}) \cdot ([\pi_K \nabla w] n_K)
    + (w - \bar{w}) \cdot ([\pi_K \nabla v] n_K)
    \Big] \dif s.
  \end{split}
\end{alignat}
We will use this bilinear form in the following analysis. Observe that
\cref{eq:a_h_def2} reduces to the previous definition of $a_h$ (see
\cref{eq:a_h_def1}) for $\bm{v}, \bm{w} \in \bm{X}_h^v$. Moreover, the
following boundedness result holds on the extended space $\bm{X}^v(h)$.

\begin{lemma}[Boundedness of $a_h$]
  \label{lemma:a_h_boundedness}
  For all $\bm{v}, \bm{w} \in \bm{X}^v(h)$ there holds
  \begin{equation*}
    a_h(\bm{v}, \bm{w}) \lesssim \vnorm{\bm{v}} \vnorm{\bm{w}}.
  \end{equation*}
\end{lemma}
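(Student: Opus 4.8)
The plan is to bound $a_h(\bm{v},\bm{w})$ term-by-term against the three contributions making up $\vnorm{\cdot}^2 = \norm{\nabla_h \cdot}^2 + \fsnorm{\cdot}^2$. Writing $a_h = T_1 + T_2 + T_3$, where $T_1$ is the volume term $\sum_K \int_K \nabla v : \nabla w$, $T_2$ is the penalty term, and $T_3$ is the consistency (Nitsche) term involving $\pi_K$. For $T_1$, the Cauchy--Schwarz inequality in $L^2$ gives $T_1 \le \norm{\nabla_h v}\,\norm{\nabla_h w} \le \vnorm{\bm{v}}\,\vnorm{\bm{w}}$. For $T_2$, I would apply Cauchy--Schwarz on each $\partial K$ and then the discrete Cauchy--Schwarz over $K \in \mathcal{T}$, which yields $T_2 \le \alpha\, \fsnorm{\bm{v}}\,\fsnorm{\bm{w}} \le \alpha\, \vnorm{\bm{v}}\,\vnorm{\bm{w}}$, since the weights $h_K^{-1}$ match exactly those in the definition of $\fsnorm{\cdot}$.

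The crux is $T_3$. Consider one of its two symmetric pieces, $\sum_K \int_{\partial K} (v - \bar v) \cdot ([\pi_K \nabla w] n_K)$. The essential observation is that $\pi_K \nabla w$ is a constant matrix on each $K$ --- this is precisely why the projection $\pi_K$ was introduced, as it produces a well-defined boundary trace even when $v, w \in X(h)$ lack $H^2$-regularity. For constant-valued functions the trace inequality $\norm{c}_{\partial K}^2 \lesssim h_K^{-1} \norm{c}_K^2$ holds by a scaling argument using shape-regularity, so $\norm{[\pi_K \nabla w] n_K}_{\partial K} \lesssim h_K^{-1/2} \norm{\pi_K \nabla w}_K \le h_K^{-1/2}\norm{\nabla w}_K$, where the last step uses $L^2$-stability of the orthogonal projector $\pi_K$. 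Cauchy--Schwarz on each $\partial K$ then produces a factor $h_K^{-1/2}\norm{v - \bar v}_{\partial K}\,\norm{\nabla w}_K$.

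To finish, I would sum over $K$ and apply the discrete Cauchy--Schwarz inequality, grouping the $h_K$-weights so that the velocity-jump factor reassembles $\fsnorm{\bm v} = \big(\sum_K h_K^{-1}\norm{v-\bar v}_{\partial K}^2\big)^{1/2}$ and the gradient factor reassembles $\norm{\nabla_h w}$; this bounds the piece by $\fsnorm{\bm v}\,\norm{\nabla_h w} \le \vnorm{\bm v}\,\vnorm{\bm w}$. The second piece of $T_3$ is handled identically with the roles of $\bm v$ and $\bm w$ exchanged. I expect the only genuinely delicate point to be the $T_3$ estimate --- specifically, recognizing that inserting $\pi_K$ reduces the boundary bound to the elementary trace inequality for constants, thereby circumventing the absence of a trace for $\nabla v$ on the extended space $\bm{X}^v(h)$; the remaining steps are standard applications of Cauchy--Schwarz.
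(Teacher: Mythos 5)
Your proposal is correct and follows essentially the same route as the paper's proof: Cauchy--Schwarz for the volume and penalty terms, and for the Nitsche term the combination of a discrete trace inequality (valid for the piecewise-constant field $\pi_K \nabla w$) with $L^2$-stability of the projector $\pi_K$, reassembled via discrete Cauchy--Schwarz into $\fsnorm{\bm{v}}\,\norm{\nabla_h w}$. The only difference is cosmetic ordering --- the paper applies Cauchy--Schwarz over $\partial K$ first and then the trace inequality and stability to the resulting sum, whereas you apply the elementwise estimates first --- so no further comment is needed.
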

\begin{proof}
  By definition we have that
  \begin{align*}
    a_h(\bm{v}, \bm{w}) =
    &\underbrace{\sum_{K \in \mathcal{T}}
      \int_K \nabla v : \nabla w \dif x}_{I_1}
      + \underbrace{\sum_{K \in \mathcal{T}}
      \frac{\alpha}{h_K} \int_{\partial K}  (v - \bar{v}) \cdot (w - \bar{w}) \dif s}_{I_2} \\
    &+ \underbrace{\sum_{K \in \mathcal{T}}
      -\int_{\partial K} (v - \bar{v}) \cdot ([\pi_K \nabla w] n_K) \dif s}_{I_3} \\
    &+ \underbrace{\sum_{K \in \mathcal{T}}
      -\int_{\partial K} (w - \bar{w}) \cdot ([\pi_K \nabla v] n_K) \dif s}_{I_4}.
  \end{align*}
  An application of the Cauchy--Schwarz inequality yields
  $\abs{I_1} + \abs{I_2} \lesssim \vnorm{\bm{v}} \vnorm{\bm{w}}$.  To
  bound $\abs{I_3}$ we first apply Cauchy--Schwarz to get
  \begin{align}
    \label{eq:a_h_boundedness_I_3}
    \begin{split}
      \abs{I_3} &\leq \fsnorm{\bm{v}}
      \Big(\sum_{K \in \mathcal{T}} h_K \norm{\pi_K \nabla w}_{\partial K}^2 \Big)^{1/2} \\
      &\lesssim \fsnorm{\bm{v}}
      \Big(\sum_{K \in \mathcal{T}}\norm{\pi_K \nabla w}_K^2 \Big)^{1/2} \\
      &\leq \fsnorm{\bm{v}}
      \Big(\sum_{K \in \mathcal{T}}\norm{\nabla w}_K^2 \Big)^{1/2} \\
      &\leq \vnorm{\bm{v}} \vnorm{\bm{w}}.
    \end{split}
  \end{align}
  For the second inequality in \cref{eq:a_h_boundedness_I_3} we used a
  discrete trace inequality, and for the third inequality we used
  stability of $\pi_K$.  Similar reasoning shows that
  $\abs{I_4} \lesssim \vnorm{\bm{v}} \vnorm{\bm{w}}$.  This completes the
  proof. \qed
\end{proof}

The next ingredient in our analysis is to establish an upper bound on
the consistency error for the velocity solution of the method in
\cref{eq:stokes_wf_discrete}.

\begin{lemma}[Consistency error for $a_h$]
  \label{lemma:a_h_consistency_error}
  Let $u \in H_0^1(\Omega)^d$ be the velocity solution of \cref{eq:stokes_wf}, 
  let $\bm{u} = (u, u)$, and let $\bm{u}_h \in \bm{X}_h^v$ be the discrete
  velocity solution of \cref{eq:stokes_wf_discrete}.  Then for
  all $\bm{v}_h \in \bm{X}_h^v$ and $\bm{w}_h \in \bm{V}_h^v$ it holds that
  \begin{align}
    \label{eq:a_h_consistency_error}
    a_h(\bm{u} - \bm{u}_h, \bm{w}_h) \lesssim
    \Big\{ \vnorm{\bm{u} - \bm{v}_h} + \gsnorm{v_h}  + \frac{1}{\nu} \osc{\mathbb{P}f} \Big\} \fsnorm{\bm{w}_h},
  \end{align}
  where we have introduced the notation
  \begin{alignat}{2}
    \osc{g}^2 &:= \sum_{K \in \mathcal{T}} h_K^2 \norm{g}_K^2
    \qquad &&\fra g \in L^2(\Omega)^d, \label{eq:osc_defn} \\
    \gsnorm{t_h}^2 &:= \sum_{F \in \mathcal{F}_i} h_F \norm{\jump{\nabla_h t_h} n_F}_F^2
    \qquad &&\fra t_h \in X_h.
  \end{alignat}
\end{lemma}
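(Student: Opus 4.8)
The plan is to test the error with $\bm{w}_h \in \bm{V}_h^v$ and to use the reduced formulations \cref{eq:stokes_wf_reduced_helmholtz} and \cref{eq:stokes_wf_reduced_discrete_helmholtz} together with the enrichment operator of \cref{lemma:enrichment}. Since $\bm{w}_h \in \bm{V}_h^v$ we have $w_h \in X_h^{\mathrm{BDM}}$ with $\nabla \cdot w_h = 0$, so $E_h w_h$ is well-defined and, by \cref{lemma:enrichment}(ii), $\nabla \cdot E_h w_h = \nabla \cdot w_h = 0$; hence $E_h w_h \in V$. First I would use \cref{eq:stokes_wf_reduced_discrete_helmholtz} to write $a_h(\bm{u}_h, \bm{w}_h) = \nu^{-1}(\mathbb{P}f, w_h)$, and observe that since $\bm{u} = (u,u)$ has $u - u = 0$ on every $\partial K$, the penalty term and one consistency term in \cref{eq:a_h_def2} vanish, so that
\begin{equation*}
	a_h(\bm{u}, \bm{w}_h) = \sum_{K \in \mathcal{T}} \int_K \nabla u : \nabla w_h \dif x - \sum_{K \in \mathcal{T}} \int_{\partial K}(w_h - \bar{w}_h)\cdot([\pi_K \nabla u] n_K)\dif s.
\end{equation*}
Substituting the continuous reduced problem in the form $\int_\Omega \nabla u : \nabla E_h w_h \dif x = \nu^{-1}(\mathbb{P}f, E_h w_h)$ then gives $a_h(\bm{u} - \bm{u}_h, \bm{w}_h) = T_1 + T_2 + T_3$, where $T_1 := \int_\Omega \nabla u : \nabla_h(w_h - E_h w_h)\dif x$, $T_2 := -\sum_K\int_{\partial K}(w_h - \bar{w}_h)\cdot([\pi_K\nabla u]n_K)\dif s$ and $T_3 := -\nu^{-1}(\mathbb{P}f, w_h - E_h w_h)$.

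For $T_3$ I would apply the Cauchy--Schwarz inequality cell-wise with an $h_K$-balancing and then \cref{lemma:enrichment}(iii) with $k=0$, giving $\abs{T_3} \le \nu^{-1}\osc{\mathbb{P}f}\big(\sum_K h_K^{-2}\norm{w_h - E_h w_h}_K^2\big)^{1/2} \lesssim \nu^{-1}\osc{\mathbb{P}f}\,\jsnorm{w_h} \lesssim \nu^{-1}\osc{\mathbb{P}f}\,\fsnorm{\bm{w}_h}$, the last step by \cref{eq:js_fs_semi-norm_ineq}. To treat $T_1$ and $T_2$ I would insert an arbitrary $\bm{v}_h \in \bm{X}_h^v$. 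In $T_1$, writing $\nabla u = \nabla_h(u - v_h) + \nabla_h v_h$, the first contribution is $\lesssim \norm{\nabla_h(u-v_h)}\,\norm{\nabla_h(w_h - E_h w_h)} \lesssim \vnorm{\bm{u} - \bm{v}_h}\,\fsnorm{\bm{w}_h}$, using \cref{lemma:enrichment}(iii) with $k=1$ and \cref{eq:js_fs_semi-norm_ineq}. In $T_2$, writing $\pi_K \nabla u = \pi_K\nabla(u - v_h) + \nabla v_h$ (since $\pi_K \nabla v_h = \nabla v_h$), the $\pi_K\nabla(u-v_h)$ part is bounded exactly as $I_3$ in the proof of \cref{lemma:a_h_boundedness} by $\lesssim \vnorm{\bm{u} - \bm{v}_h}\,\fsnorm{\bm{w}_h}$.

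The crux is the remaining piece $R := \int_\Omega \nabla_h v_h : \nabla_h(w_h - E_h w_h)\dif x - \sum_K\int_{\partial K}(w_h - \bar{w}_h)\cdot((\nabla v_h)n_K)\dif s$. Since $v_h$ is piecewise linear, $\nabla_h v_h$ is piecewise constant and an element-wise integration by parts gives $\int_K \nabla v_h : \nabla(w_h - E_h w_h)\dif x = \int_{\partial K}((\nabla v_h)n_K)\cdot(w_h - E_h w_h)\dif s$; substituting this, the $w_h$-terms cancel and
\begin{equation*}
	R = \sum_{K \in \mathcal{T}} \int_{\partial K}((\nabla v_h)n_K)\cdot(\bar{w}_h - E_h w_h)\dif s = \sum_{F \in \mathcal{F}_i}\int_F(\jump{\nabla_h v_h}n_F)\cdot(\bar{w}_h - E_h w_h)\dif s,
\end{equation*}
where I have regrouped by faces, used that $\bar{w}_h$ and $E_h w_h$ are single-valued on interior faces, and noted that boundary faces drop out because $\bar{w}_h$ vanishes on $\mathcal{F}_b$ and $E_h w_h \in H_0^1(\Omega)^d$. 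Here \cref{lemma:enrichment}(i) enters: as $\jump{\nabla_h v_h}n_F$ is constant on each $F$, I would replace $\int_F E_h w_h\dif s$ by $\int_F \av{w_h}\dif s$, so that $\int_F(\bar{w}_h - E_h w_h)\dif s = \int_F(\bar{w}_h - \av{w_h})\dif s$. A Cauchy--Schwarz over faces with an $h_F$-balancing, together with $\bar{w}_h - \av{w_h} = \tfrac12\sum_i(\bar{w}_h - w_h|_{K_i})$ and shape-regularity, then yields $\abs{R} \lesssim \gsnorm{v_h}\,\fsnorm{\bm{w}_h}$. Collecting the bounds on $T_3$, the two $\bm{v}_h$-contributions, and $R$ produces \cref{eq:a_h_consistency_error}.

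The main obstacle is securing $\fsnorm{\bm{w}_h}$, rather than the full $\vnorm{\bm{w}_h}$, on the right-hand side: a direct appeal to \cref{lemma:a_h_boundedness} would only deliver $\vnorm{\bm{w}_h}$. The $\norm{\nabla_h w_h}$ contribution is instead absorbed by systematically replacing $w_h$ with $w_h - E_h w_h$ and invoking the interpolation estimate \cref{lemma:enrichment}(iii), which trades a full broken gradient for $\jsnorm{w_h} \lesssim \fsnorm{\bm{w}_h}$; meanwhile the facet-mean identity \cref{lemma:enrichment}(i) is precisely what converts $R$ into the new $\gsnorm{v_h}$ term, which reflects the hybridized structure and is the term not present in previous analyses. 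The two subtle points to get right are that the boundary faces contribute nothing to $R$, which needs both $\bar{w}_h|_{\mathcal{F}_b}=0$ and $E_h w_h|_{\partial\Omega}=0$, and the careful $h$-weighting in each Cauchy--Schwarz step so that the weighted face norms assemble exactly into $\gsnorm{v_h}$ and $\fsnorm{\bm{w}_h}$.
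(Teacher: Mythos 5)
Your proposal is correct and follows essentially the same route as the paper's proof: your $T_3$ is exactly the paper's term $I_1 = \nu^{-1}(\mathbb{P}f, E_h w_h - w_h)$, and your $T_1 + T_2$ equals $a_h(\bm{u}, \bm{z}_h)$ with $\bm{z}_h = \bm{w}_h - (E_h w_h, E_h w_h)$, which the paper splits as $a_h(\bm{u}-\bm{v}_h,\bm{z}_h) + a_h(\bm{v}_h,\bm{z}_h)$ and treats with the identical ingredients (boundedness of $a_h$, element-wise integration by parts using that $v_h$ is piecewise linear, and \cref{itm:e_h_facet_mean} of \cref{lemma:enrichment} to convert the facet sum into the $\gsnorm{v_h}\fsnorm{\bm{w}_h}$ bound). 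The only difference is organizational: you expand at the level of integrals so the penalty terms cancel before the splitting, whereas the paper keeps whole $a_h(\cdot,\cdot)$ evaluations and lets them cancel inside $I_2 + I_3$.
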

\begin{proof}
  Let $\bm{v}_h \in \bm{X}_h^v$ and $\bm{w}_h \in \bm{V}_h^v$. We set
  \begin{equation*}
    \bm{z}_h = \bm{w}_h - (E_h w_h, E_h w_h) = (w_h - E_h w_h, \bar{w}_h - E_h w_h).
  \end{equation*}
  Then
  \begin{align}
    \label{eq:a_h_consistency_three_terms}
    \begin{split}
      a_h(\bm{u} - \bm{u}_h, \bm{w}_h)
      =& \big[a_h(\bm{u}, (E_h w_h, E_h w_h)) - a_h(\bm{u}_h, \bm{w}_h)\big] \\
      &+ a_h(\bm{u} - \bm{v}_h, \bm{z}_h)
      + a_h(\bm{v}_h, \bm{z}_h) \\
      =& \underbrace{\big[a(u, E_h w_h) - a_h(\bm{u}_h, \bm{w}_h)\big]}_{I_1}
      + \underbrace{a_h(\bm{u} - \bm{v}_h, \bm{z}_h)}_{I_2}
      + \underbrace{a_h(\bm{v}_h, \bm{z}_h)}_{I_3}.
    \end{split}
  \end{align}
  We first bound $I_1$. Since $\bm{w}_h \in \bm{V}_h^v$ we have
  $w_h \in X_h^{\mathrm{BDM}}$ with $\nabla \cdot w_h = 0$.  Thus
  $E_h w_h \in V$ by \cref{itm:e_h_div} of \cref{lemma:enrichment}.
  Using the reduced problems \cref{eq:stokes_wf_reduced_helmholtz} and
  \cref{eq:stokes_wf_reduced_discrete_helmholtz}, the Cauchy--Schwarz
  inequality, and \cref{itm:e_h_interp} of \cref{lemma:enrichment} with
  $k=0$,
  \begin{align}
    \label{eq:a_h_consistency_I_1_bound}
    \begin{split}
      I_1 &= \frac{1}{\nu} (\mathbb{P} f, E_h w_h - w_h) \\
      &\leq \frac{1}{\nu} \osc{\mathbb{P} f} \Bigg[ \sum_{K \in \mathcal{T}} h_K^{-2} \norm{E_h w_h - w_h}_K^2 \Bigg]^{1/2} \\
      &\lesssim \frac{1}{\nu} \osc{\mathbb{P} f} \jsnorm{w_h} \\
      &\lesssim \frac{1}{\nu} \osc{\mathbb{P} f} \fsnorm{\bm{w}_h}.
    \end{split}
  \end{align}
  We now bound $I_2$. Using \cref{itm:e_h_interp} of
  \cref{lemma:enrichment} with $k=1$ we have
  \begin{equation*}
    \vnorm{\bm{z}_h}^2
    = \norm{\nabla_h (w_h - E_h w_h)}^2 + \fsnorm{\bm{w}_h}^2
    \lesssim \fsnorm{\bm{w}_h}^2
  \end{equation*}
  so that $\vnorm{\bm{z}_h} \lesssim \fsnorm{\bm{w}_h}$.  Hence by
  \cref{lemma:a_h_boundedness} we have
  \begin{equation}
    \label{eq:a_h_consistency_I_2_bound}
    I_2 \lesssim \vnorm{\bm{u} - \bm{v}_h} \vnorm{\bm{z}_h} \lesssim \vnorm{\bm{u} - \bm{v}_h} \fsnorm{\bm{w}_h}.
  \end{equation}
  To bound $I_3$ we use the definition \cref{eq:a_h_def2} of $a_h$ and
  integrate by parts element-wise.  Using that $(\nabla^2 v_h)|_K = 0$
  as $v_h$ is piecewise linear, this results in
  \begin{align}
    \label{eq:a_h_consistency_I_3_bound_1}
    \begin{split}
      I_3 = &\underbrace{\sum_{K \in \mathcal{T}}
	\frac{\alpha}{h_K} \int_{\partial K}  (v_h - \bar{v}_h) \cdot (z_h - \bar{z}_h) \dif s
	- \int_{\partial K} (v_h - \bar{v}_h) \cdot ([\pi_K \nabla z_h] n_K) \dif s}_{I_{3, 1}} \\
      &+ \underbrace{\sum_{K \in \mathcal{T}}
	\int_{\partial K} \bar{z}_h \cdot ([\nabla v_h] n_K) \dif s}_{I_{3, 2}}.
    \end{split}
  \end{align}
  Using the same arguments from \cref{lemma:a_h_boundedness} (namely
  those used in \cref{eq:a_h_boundedness_I_3}) one sees that
  \begin{equation}
    \label{eq:a_h_consistency_I_31}
    I_{3, 1} \lesssim \fsnorm{\bm{v}_h} \vnorm{\bm{z}_h}
    = \fsnorm{\bm{u} - \bm{v}_h} \vnorm{\bm{z}_h}
    \lesssim \vnorm{\bm{u} - \bm{v}_h} \fsnorm{\bm{w}_h}.
  \end{equation}
  Also, rewriting $I_{3, 2}$ in terms of facet integrals, applying
  \cref{itm:e_h_facet_mean} of \cref{lemma:enrichment}, and using the
  Cauchy--Schwarz inequality, we find
  \begin{align}
    \label{eq:a_h_consistency_I_32}
    \begin{split}
      I_{3, 2} &= \sum_{F \in \mathcal{F}_i} \int_F \bar{z}_h \cdot (\jump{\nabla v_h} n_F) \dif s \\
      &= \sum_{F \in \mathcal{F}_i} \int_F (\bar{w}_h - E_h w_h) \cdot (\jump{\nabla v_h} n_F) \dif s \\
      &= \sum_{F \in \mathcal{F}_i} \int_F (\bar{w}_h - \av{w_h}) \cdot (\jump{\nabla v_h} n_F) \dif s \\
      &\lesssim \fsnorm{\bm{w}_h} \gsnorm{v_h}.
    \end{split}
  \end{align}
  Using \cref{eq:a_h_consistency_I_31} and
  \cref{eq:a_h_consistency_I_32} in
  \cref{eq:a_h_consistency_I_3_bound_1} we obtain
  \begin{equation}
    \label{eq:a_h_consistency_I_3_bound_2}
    I_3 \lesssim \Big[\vnorm{\bm{u} - \bm{v}_h} + \gsnorm{v_h} \Big] \fsnorm{\bm{w}_h}.
  \end{equation}
  Finally, using the bounds
  \cref{eq:a_h_consistency_I_1_bound,eq:a_h_consistency_I_2_bound,eq:a_h_consistency_I_3_bound_2}
  in \cref{eq:a_h_consistency_three_terms} yields the desired result
  \cref{eq:a_h_consistency_error}.  \qed
\end{proof}

With boundedness and consistency results established for the method
\cref{eq:stokes_wf_discrete}, we can now derive our main error
estimate.

\begin{theorem}[Velocity error]
  \label{thm:main_u_estimate}
  Let $u \in H_0^1(\Omega)^d$ be the velocity solution of \cref{eq:stokes_wf}, 
  let $\bm{u} = (u, u)$, and let $\bm{u}_h \in \bm{X}_h^v$ be the discrete
  velocity solution of \cref{eq:stokes_wf_discrete}. Then
  \begin{equation}
    \label{eq:main_u_estimate}
    \vnorm{\bm{u} - \bm{u}_h} \lesssim
    \inf_{\bm{v}_h \in \bm{X}_h^v}
    \Big[
    \vnorm{\bm{u} - \bm{v}_h}
    + \gsnorm{v_h}
    \Big]
    + \frac{1}{\nu} \osc{\mathbb{P}f}.
  \end{equation}
\end{theorem}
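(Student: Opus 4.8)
The plan is to combine coercivity of $a_h$ on the reduced space with the consistency error bound from \cref{lemma:a_h_consistency_error}, using a triangle-inequality splitting through an arbitrary discrete reduced function. Let me think about what the natural argument is.

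We want to bound $\vnorm{\bm{u} - \bm{u}_h}$. The discrete solution $\bm{u}_h$ lives in $\bm{V}_h^v$, and so does any comparison function $\bm{w}_h \in \bm{V}_h^v$. So the first move is to split via the triangle inequality through an arbitrary $\bm{w}_h \in \bm{V}_h^v$: $\vnorm{\bm{u} - \bm{u}_h} \le \vnorm{\bm{u} - \bm{w}_h} + \vnorm{\bm{w}_h - \bm{u}_h}$. The second term $\vnorm{\bm{w}_h - \bm{u}_h}$ is a difference of functions in $\bm{V}_h^v$, so by coercivity \cref{eq:a_h_coercive} it can be controlled by $a_h(\bm{w}_h - \bm{u}_h, \bm{w}_h - \bm{u}_h)$, i.e. by testing $a_h(\cdot, \bm{w}_h - \bm{u}_h)$ against the difference itself.

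Let me write the setup. Let $\bm{v}_h \in \bm{X}_h^v$ be arbitrary and let $\bm{w}_h \in \bm{V}_h^v$ be its best-approximation counterpart (to be chosen at the end). Set $\bm{e}_h = \bm{w}_h - \bm{u}_h \in \bm{V}_h^v$. By coercivity,
\[
\vnorm{\bm{e}_h}^2 \lesssim a_h(\bm{e}_h, \bm{e}_h) = a_h(\bm{w}_h - \bm{u}, \bm{e}_h) + a_h(\bm{u} - \bm{u}_h, \bm{e}_h).
\]
The first term is bounded by $\vnorm{\bm{w}_h - \bm{u}}\,\vnorm{\bm{e}_h}$ using boundedness \cref{lemma:a_h_boundedness}. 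The second term is exactly the consistency error, and since $\bm{e}_h \in \bm{V}_h^v$ we can apply \cref{lemma:a_h_consistency_error} with test function $\bm{e}_h$ and any $\bm{v}_h \in \bm{X}_h^v$:
\[
a_h(\bm{u} - \bm{u}_h, \bm{e}_h) \lesssim \Big\{ \vnorm{\bm{u} - \bm{v}_h} + \gsnorm{v_h} + \tfrac{1}{\nu}\osc{\mathbb{P}f} \Big\} \fsnorm{\bm{e}_h}.
\]
Using $\fsnorm{\bm{e}_h} \le \vnorm{\bm{e}_h}$ and dividing through by $\vnorm{\bm{e}_h}$ yields a bound on $\vnorm{\bm{e}_h}$ by the bracketed quantity plus $\vnorm{\bm{u} - \bm{w}_h}$.

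Combining with the triangle-inequality split and using \cref{eq:reduced_vs_full_approx} to pass from the best reduced approximation $\inf_{\bm{w}_h \in \bm{V}_h^v}\vnorm{\bm{u} - \bm{w}_h}$ back to $\inf_{\bm{v}_h \in \bm{X}_h^v}\vnorm{\bm{u} - \bm{v}_h}$, and finally taking the infimum over $\bm{v}_h \in \bm{X}_h^v$, gives \cref{eq:main_u_estimate}. The one subtlety I anticipate is bookkeeping the two different discrete functions: the consistency lemma requires $\bm{v}_h \in \bm{X}_h^v$ (full space) as the near-best approximant inside the bracket, while the coercivity argument forces the error comparison function $\bm{w}_h$ to lie in $\bm{V}_h^v$ (reduced space). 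The link between these is precisely \cref{eq:reduced_vs_full_approx}, so the main care is in choosing $\bm{w}_h$ to realize (up to a constant) $\inf_{\bm{v}_h \in \bm{X}_h^v}\vnorm{\bm{u} - \bm{v}_h}$ and keeping $\bm{v}_h$ free for the final infimum. No genuinely hard estimate remains; the work is entirely in assembling the three established ingredients (coercivity, boundedness, consistency) in the correct order.
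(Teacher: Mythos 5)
Your proposal is correct and follows essentially the same route as the paper's proof: coercivity \cref{eq:a_h_coercive} applied to the reduced-space error $\bm{w}_h - \bm{u}_h \in \bm{V}_h^v$, boundedness (\cref{lemma:a_h_boundedness}), the consistency bound (\cref{lemma:a_h_consistency_error}), and \cref{eq:reduced_vs_full_approx} to pass from the reduced to the full space, followed by a triangle inequality and the final infimum. The one difference is a small simplification in your favor: since \cref{lemma:a_h_consistency_error} allows the approximant $\bm{v}_h \in \bm{X}_h^v$ to be chosen independently of the test function in $\bm{V}_h^v$, keeping $\bm{v}_h$ free (rather than inserting the reduced approximant $\tilde{\bm{v}}_h$ into the consistency bound, as the paper does) produces $\gsnorm{v_h}$ in the bracket directly, so you never need the paper's extra step \cref{eq:main_u_estimate_part4} converting $\gsnorm{\tilde{v}_h}$ back to $\gsnorm{v_h}$ via a discrete trace inequality.
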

\begin{proof}
  Let $\bm{v}_h \in \bm{X}_h^v$.  Owing to \cref{eq:reduced_vs_full_approx}
  we can find $\tilde{\bm{v}}_h \in \bm{V}_h^v$ with
  $\vnorm{\bm{u} - \tilde{\bm{v}}_h} \lesssim \vnorm{\bm{u} - \bm{v}_h}$.
  Let $\bm{w}_h = (\bm{u}_h - \tilde{\bm{v}}_h) \in \bm{V}_h^v$.  Using discrete
  coercivity \cref{eq:a_h_coercive} along with the boundedness and
  consistency results
  \crefrange{lemma:a_h_boundedness}{lemma:a_h_consistency_error},
  \begin{align}
    \label{eq:main_u_estimate_part1}
    \begin{split}
      \vnorm{\bm{w}_h}^2 &\lesssim a_h(\bm{w}_h, \bm{w}_h) \\
      &= a_h(\bm{u}_h - \bm{u}, \bm{w}_h) + a_h(\bm{u} - \tilde{\bm{v}}_h,
      \bm{w}_h) \\
      &\lesssim
      \Big\{
      \Big[
      \vnorm{\bm{u} - \tilde{\bm{v}}_h}
      + \gsnorm{\tilde{v}_h}
      \Big]
      + \frac{1}{\nu} \osc{\mathbb{P}f}
      \Big\} \vnorm{\bm{w}_h}.
    \end{split}
  \end{align}
  Therefore, dividing \cref{eq:main_u_estimate_part1} by
  $\vnorm{\bm{w}_h}$ we arrive at
  \begin{equation}
    \label{eq:main_u_estimate_part2}
    \vnorm{\bm{u}_h - \tilde{\bm{v}}_h} \lesssim
    \Big[
    \vnorm{\bm{u} - \tilde{\bm{v}}_h}
    + \gsnorm{\tilde{v}_h}
    \Big]
    + \frac{1}{\nu} \osc{\mathbb{P}f}.
  \end{equation}
  Using the triangle inequality and
  \cref{eq:main_u_estimate_part2} we obtain
  \begin{align}
  \begin{split}
  	\label{eq:main_u_estimate_part3}
    \vnorm{\bm{u} - \bm{u}_h} &\leq \vnorm{\bm{u} - \tilde{\bm{v}}_h} +
    \vnorm{\bm{u}_h
    - \tilde{\bm{v}}_h}
    \\
    &\lesssim
    \Big[
    \vnorm{\bm{u} - \tilde{\bm{v}}_h}
    + \gsnorm{\tilde{v}_h}
    \Big]
    + \frac{1}{\nu} \osc{\mathbb{P}f}.
  \end{split}
  \end{align}
  Also, by the triangle inequality and a discrete trace inequality we have
  \begin{align}
  \begin{split}
  	\label{eq:main_u_estimate_part4}
    \gsnorm{\tilde{v}_h} &\leq \gsnorm{\tilde{v}_h - v_h} + \gsnorm{v_h}
    \\
    &\lesssim \norm{\nabla_h (\tilde{v}_h - v_h)} + \gsnorm{v_h}
    \\
    &\leq \vnorm{\tilde{\bm{v}}_h - \bm{v}_h} + \gsnorm{v_h}
    \\
    &\leq \vnorm{\bm{u} - \tilde{\bm{v}}_h} + \vnorm{\bm{u} - \bm{v}_h} + \gsnorm{v_h}.
  \end{split}
  \end{align}
  Combining \crefrange{eq:main_u_estimate_part3}{eq:main_u_estimate_part4}
  and using that $\vnorm{\bm{u} - \tilde{\bm{v}}_h} \lesssim \vnorm{\bm{u} -
  \bm{v}_h}$
  we obtain
  \begin{equation*}
  	\vnorm{\bm{u} - \bm{u}_h} \lesssim
  	\Big[
  	\vnorm{\bm{u} - \bm{v}_h}
  	+ \gsnorm{v_h}
  	\Big]
  	+ \frac{1}{\nu} \osc{\mathbb{P}f}.
  \end{equation*}
  The desired result \cref{eq:main_u_estimate} follows as
  $\bm{v}_h \in \bm{X}_h^v$ is arbitrary.  \qed
\end{proof}

\begin{remark}[Pressure-robustness of the data oscillation term]
\label{rmk:pr_data_osc}
  As discussed in \cite[Remark 5.9]{linke2018quasi}, the function
  $\frac{1}{\nu}\mathbb{P}f$ is independent of both the
  pressure $p$ and the viscosity $\nu$.
  This can be seen by
  extending the domain of the Helmholtz projector $\mathbb{P}$ to
  $[H^{-1}(\Omega)]^d$, and then utilizing the fact that
  $-\nu \Updelta u + \nabla p = f$ holds in the distributional sense.
  One finds that
  \begin{equation}
    \label{eq:data_osc_pr}
    \frac{1}{\nu}\mathbb{P}f
    = \frac{1}{\nu}\mathbb{P}(-\nu \Updelta u + \nabla p)
    = \mathbb{P}(-\Updelta u) \in L^2(\Omega)^d,
  \end{equation}
  since $\mathbb{P}(\nabla p) = 0$.  We refer the reader to
  \cite[Section 3]{linke2019pressure} for a more detailed discussion
  of these ideas.  A consequence of \cref{eq:data_osc_pr} is that the
  data oscillation term appearing in \cref{eq:main_u_estimate}
  can equivalently be written as
  $\frac{1}{\nu} \osc{\mathbb{P}f} = \osc{\mathbb{P}(-\Updelta u)}$.
  Because this quantity depends only on the velocity,
  the error estimate \cref{eq:main_u_estimate} is pressure-robust.
  We emphasize that 
  \cref{eq:main_u_estimate}
  would not be a pressure-robust error estimate
  if it contained $\frac{1}{\nu} \osc{f}$
  instead of $\frac{1}{\nu} \osc{\mathbb{P} f}$.
\end{remark}

Our next step is to show that the interpolation error term
appearing in \cref{thm:main_u_estimate} converges optimally
with respect to the mesh size $h$.

\begin{lemma}[Interpolation error]
  \label{lemma:interp_error}
  Let $\psi \in H_0^1(\Omega)^d \cap H^{1+s}(\Omega)^d$ 
  with $s \in [0, 1]$, and set $\bm{\psi} = (\psi, \psi)$. Then
  \begin{align}
    \label{eq:interp_error_estimate}
    \inf_{\bm{v}_h \in \bm{X}_h^v}
    \Big[
    \vnorm{\bm{\psi} - \bm{v}_h}
    + \gsnorm{v_h}
    \Big]
    \lesssim_s h^s \norm{\psi}_{1+s}.
  \end{align}
\end{lemma}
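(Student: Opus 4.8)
The plan is to exhibit a single competitor $\bm{v}_h \in \bm{X}_h^v$ that is admissible for both the HDG and EDG--HDG methods and to bound the two contributions $\vnorm{\bm{\psi}-\bm{v}_h}$ and $\gsnorm{v_h}$ separately. The natural candidate is the conforming quasi-interpolant from \cref{lemma:interp_op}: set $v_h := I_h \psi \in X_h^c \subset X_h$ and let $\bar{v}_h := (I_h \psi)|_{\Gamma_0}$ be its trace on the skeleton. Since $I_h \psi$ is continuous and vanishes on $\partial\Omega$, the trace $\bar{v}_h$ is single-valued, continuous, and zero on $\mathcal{F}_b$, so $\bm{v}_h = (v_h, \bar{v}_h)$ lies in $X_h \times (\bar{X}_h \cap C^0(\Gamma_0)^d)$ and is therefore admissible in both cases of \cref{eq:discrete_v_space_def}.

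For the energy term, I would first observe that the facet semi-norm contribution vanishes identically for this choice. Indeed, $\bm{\psi} - \bm{v}_h = (\psi - I_h \psi,\, (\psi - I_h \psi)|_{\Gamma_0})$, and because both $\psi \in H_0^1(\Omega)^d$ and $I_h \psi \in X_h^c$ are globally continuous, the interior trace of $\psi - I_h \psi$ on each $\partial K$ agrees with its skeleton value; hence $\fsnorm{\bm{\psi} - \bm{v}_h} = 0$ and $\vnorm{\bm{\psi} - \bm{v}_h} = \norm{\nabla_h(\psi - I_h \psi)}$. The latter is controlled directly by \cref{eq:inter_op_estimate}, giving $\vnorm{\bm{\psi} - \bm{v}_h} \lesssim_s h^s \norm{\psi}_{1+s}$.

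The substantive work is the jump term $\gsnorm{I_h \psi}$. For an interior face $F$ shared by $K_1, K_2$, I would subtract a single-valued constant matrix $G_F$, chosen as the $L^2(\omega_F)$-projection of $\nabla \psi$ onto constants on the patch $\omega_F := K_1 \cup K_2$; since $G_F$ is single-valued it drops out of the jump, so $\jump{\nabla_h I_h \psi} n_F = \jump{(\nabla_h I_h \psi - G_F)} n_F$. On each $K_i$ the matrix $\nabla_h I_h \psi - G_F$ is constant, so a scaling estimate gives $h_F \norm{(\nabla_h I_h \psi - G_F) n_F}_F^2 \lesssim \norm{\nabla_h I_h \psi - G_F}_{K_i}^2$. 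Splitting this through $\nabla \psi$ and summing over faces, using the finite overlap of the patches $\{\omega_F\}$ afforded by shape-regularity of $\mathcal{T}$, I would arrive at $\gsnorm{I_h \psi}^2 \lesssim \norm{\nabla_h(I_h \psi - \psi)}^2 + \sum_{F \in \mathcal{F}_i} \norm{\nabla \psi - G_F}_{\omega_F}^2$. The first term is again bounded by $h^{2s} \norm{\psi}_{1+s}^2$ via \cref{lemma:interp_op}.

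The main obstacle, and the place where the restriction $s \in [0,1]$ enters, is the second sum: I must show $\norm{\nabla \psi - G_F}_{\omega_F} \lesssim_s h^s \abs{\psi}_{1+s, \omega_F}$. This is a fractional Bramble--Hilbert/Poincar\'e estimate for the mean-value projection applied to $\nabla \psi \in H^s(\omega_F)^{d \times d}$, and the delicate point is that it must be proved entirely with volume norms on the two-element patch: for $s \le 1/2$ the gradient $\nabla \psi$ has no face trace, so the classical route of comparing face traces of $\nabla \psi$ across $F$ is unavailable. I would establish the patch estimate by a scaling argument to a reference configuration together with the fractional Poincar\'e inequality, tracking the dependence on $s$ to produce the constant $C(s)$ implicit in $\lesssim_s$; summing then yields the claim.
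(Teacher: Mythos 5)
Your proposal is correct and takes essentially the same route as the paper: the same competitor $(I_h\psi, I_h\psi)$ with vanishing facet semi-norm bounded via \cref{lemma:interp_op}, the same subtraction of patchwise constant matrices combined with a discrete trace inequality to reduce $\gsnorm{I_h\psi}$ to volume terms, and the same fractional Bramble--Hilbert estimate on the two-element patches $\omega_F$. The only difference is in the step you rightly flag as delicate: the paper disposes of it by citing the fractional-order Bramble--Hilbert lemma of Dupont and Scott, which is the clean way to proceed, since the patches $\omega_F$ do not reduce to a single reference configuration and your ``scaling to a reference configuration'' would need to be replaced by a uniform estimate over star-shaped domains with bounded chunkiness --- which is precisely what that lemma provides.
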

\begin{proof}
  For each $F \in \mathcal{F}_i$ we introduce the patch $\omega_F$
  of elements sharing $F$,
  \begin{equation*}
  	\omega_F :=
  	\bigcup \{K \in \mathcal{T} : F \textrm{ is a face of } K\}.
  \end{equation*}
  Note that $\omega_F$ is the union of exactly two elements.
  Now let $\bm{v}_h \in \bm{X}_h^v$ and
  $g_F \in [\mathcal{P}_0(\omega_F)]^{d \times d}$
  be arbitrary.  By a discrete trace inequality and the triangle inequality,
  \begin{align}
    \label{eq:interp_error_gsnorm}
    \begin{split}
      \gsnorm{v_h}^2
      &\leq \sum_{F \in \mathcal{F}_i}
      h_F \norm{\jump{\nabla_h v_h}}_F^2 \\
      &= \sum_{F \in \mathcal{F}_i}
      h_F \norm{\jump{\nabla_h v_h - g_F}}_F^2 \\
      &\lesssim \sum_{F \in \mathcal{F}_i}
      \norm{\nabla_h v_h - g_F}_{\omega_F}^2 \\
      &\lesssim \sum_{F \in \mathcal{F}_i} \Big[
      \norm{\nabla_h (\psi - v_h)}_{\omega_F}^2
      + \norm{\nabla \psi - g_F}_{\omega_F}^2 \Big] \\
      &\lesssim \vnorm{\bm{\psi} - \bm{v}_h}^2
      + \sum_{F \in \mathcal{F}_i} \norm{\nabla \psi - g_F}_{\omega_F}^2.
    \end{split}
  \end{align}
  Since $\bm{v}_h \in \bm{X}_h^v$ and
  $g_F \in [\mathcal{P}_0(\omega_F)]^{d \times d}$
  are arbitrary, \cref{eq:interp_error_gsnorm} yields that
  \begin{align}
  \begin{split}
    \label{eq:interp_error_two_terms}
    \inf_{\bm{v}_h \in \bm{X}_h^v}
    \Big[
    \vnorm{\bm{\psi} - \bm{v}_h}
    + \gsnorm{v_h}
    \Big]
    \lesssim
    &\underbrace{\Big(
   	\sum_{F \in \mathcal{F}_i}
   	\inf_{g_F \in [\mathcal{P}_0(\omega_F)]^{d \times d}}
   	\norm{\nabla \psi - g_F}_{\omega_F}^2
   	\Big)^{1/2}}_{I_1}
     \\
    &+ \underbrace{\inf_{\bm{v}_h \in \bm{X}_h^v} \vnorm{\bm{\psi} - \bm{v}_h}}_{I_2}.
  \end{split}
  \end{align}
  A bound for $I_1$ follows from the fractional order Bramble--Hilbert lemma
  \cite[Theorem~6.1]{dupont1980polynomial} applied to the patches $\omega_F$:
  \begin{equation}
  \label{eq:interp_error_I1}
  I_1 \lesssim_s
  \Big(\sum_{F \in \mathcal{F}_i}
  h_F^{2s} \norm{\nabla \psi}_{s, \omega_F}^2 \Big)^{1/2}
  \lesssim h^s \norm{\psi}_{1+s}.
  \end{equation}
  To bound $I_2$ we take $\bm{v}_h = (I_h \psi, I_h \psi) \in \bm{X}_h^v$
  where $I_h$ is the quasi-interpolation operator introduced in
  \cref{lemma:interp_op}.  We find that
  \begin{equation}
  \label{eq:interp_error_I2}
  I_2 \leq \vnorm{\bm{\psi} - \bm{v}_h}
  = \norm{\nabla_h(\psi - I_h \psi)}
  \lesssim_s h^s \norm{\psi}_{1+s}.
  \end{equation}
  Using the bounds \crefrange{eq:interp_error_I1}{eq:interp_error_I2}
  in \cref{eq:interp_error_two_terms} yields the desired result. \qed
\end{proof}

An immediate consequence of \cref{thm:main_u_estimate} and
\cref{lemma:interp_error} is the following error estimate, which is
pressure-robust and optimal in the discrete energy norm.

\begin{corollary}[Pressure-robust error estimate]
  \label{coro:conv_result}
  In addition to the assumptions of \cref{thm:main_u_estimate}, assume
  that $u \in H^{1+s}(\Omega)^d$ with $s \in [0, 1]$.  Then
  \begin{equation*}
    \vnorm{\bm{u} - \bm{u}_h}
    \lesssim_s
    h^s \norm{u}_{1+s}
    + \frac{1}{\nu} \osc{\mathbb{P}f}.
  \end{equation*}
  Also, owing to \cref{eq:osc_defn}, the data oscillation term can be
  estimated as
  \begin{equation*}
  	\frac{1}{\nu} \osc{\mathbb{P}f} \leq h \norm{\frac{1}{\nu} \mathbb{P}f}.
  \end{equation*}
\end{corollary}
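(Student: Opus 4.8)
The plan is to read the result off directly from the two preceding statements, since the corollary simply packages together \cref{thm:main_u_estimate} and \cref{lemma:interp_error}. First I would invoke \cref{thm:main_u_estimate}, which already furnishes
\[
\vnorm{\bm{u} - \bm{u}_h} \lesssim
\inf_{\bm{v}_h \in \bm{X}_h^v}
\Big[ \vnorm{\bm{u} - \bm{v}_h} + \gsnorm{v_h} \Big]
+ \frac{1}{\nu} \osc{\mathbb{P}f}.
\]
The added hypothesis $u \in H^{1+s}(\Omega)^d$, together with the standing assumption $u \in H_0^1(\Omega)^d$ from \cref{thm:main_u_estimate}, means that $u$ meets exactly the regularity required by \cref{lemma:interp_error}. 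Taking $\psi = u$ and $\bm{\psi} = \bm{u} = (u,u)$ in that lemma bounds the infimum term by $h^s \norm{u}_{1+s}$, up to a constant depending on $s$. Substituting this into the right-hand side of the theorem yields the first displayed estimate.

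For the data oscillation bound I would appeal to the definition \cref{eq:osc_defn} directly. Since $h_K \leq h$ for every $K \in \mathcal{T}$,
\[
\osc{\mathbb{P}f}^2
= \sum_{K \in \mathcal{T}} h_K^2 \norm{\mathbb{P}f}_K^2
\leq h^2 \sum_{K \in \mathcal{T}} \norm{\mathbb{P}f}_K^2
= h^2 \norm{\mathbb{P}f}^2.
\]
Taking square roots and dividing by $\nu$ gives $\frac{1}{\nu}\osc{\mathbb{P}f} \leq h \norm{\frac{1}{\nu}\mathbb{P}f}$, which is the second asserted inequality.

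Because the corollary is a direct composition of earlier results, there is essentially no obstacle in its proof: the genuine work was already carried out in establishing the consistency bound of \cref{lemma:a_h_consistency_error}, with its pressure-robust oscillation term, and the fractional-order interpolation estimate of \cref{lemma:interp_error}. The only point requiring a moment's care is confirming that the hypotheses of \cref{lemma:interp_error} are genuinely met, namely that the combined regularity $u \in H_0^1(\Omega)^d \cap H^{1+s}(\Omega)^d$ is precisely what that lemma demands, so that the interpolation estimate may legitimately be applied with $\psi = u$ and the constant hidden in $\lesssim$ depend only on the admissible quantities together with $s$.
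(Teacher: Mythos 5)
Your proposal is correct and matches the paper's reasoning exactly: the paper offers no separate proof, stating the corollary as an immediate consequence of \cref{thm:main_u_estimate} and \cref{lemma:interp_error}, which is precisely the composition you carry out (applying the lemma with $\psi = u$), and your oscillation bound via $h_K \leq h$ in \cref{eq:osc_defn} is the intended elementary estimate.
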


\begin{remark}[Convergence under $H^1$-regularity]
  In the case of $s=0$, where only $H^1$-regularity of $u$ is assumed,
  \cref{coro:conv_result} does not predict that
  $\vnorm{\bm{u} - \bm{u}_h} \rightarrow 0$ as $h \rightarrow 0$.
  This can still be proven, however, using \cref{thm:main_u_estimate}
  and a density argument.  Indeed, let $\epsilon > 0$.
  By definition, $H_0^1(\Omega)^d$ is the closure of $C_0^{\infty}(\Omega)^d$ 
  under the $H^1$-norm. As $u \in H_0^1(\Omega)^d$ we can therefore find
  $\phi \in C_0^{\infty}(\Omega)^d$ with
  $\abs{u - \phi}_1 < \epsilon$.  Setting $\bm{\phi} = (\phi, \phi)$,
  the triangle inequality and \cref{lemma:interp_error} then yield
  \begin{align}
    \begin{split} \label{eq:lowest_reg_limit_1}
      \inf_{\bm{v}_h \in \bm{X}_h^v}
      \Big[
      \vnorm{\bm{u} - \bm{v}_h}
      + \gsnorm{v_h}
      \Big]
      &\leq
      \vnorm{\bm{u} - \bm{\phi}} +
      \inf_{\bm{v}_h \in \bm{X}_h^v}
      \Big[
      \vnorm{\bm{\phi} - \bm{v}_h}
      + \gsnorm{v_h}
      \Big] \\
      &= \abs{u - \phi}_1 +
      \inf_{\bm{v}_h \in \bm{X}_h^v}
      \Big[
      \vnorm{\bm{\phi} - \bm{v}_h}
      + \gsnorm{v_h}
      \Big] \\
      &\lesssim \epsilon + h \norm{\phi}_{2} \\
      &\leq 2 \epsilon,
    \end{split}
  \end{align}
  where the last inequality in \cref{eq:lowest_reg_limit_1} holds for
  $h$ sufficiently small.  But $\epsilon > 0$ is arbitrary, and
  therefore \cref{eq:lowest_reg_limit_1} implies that
  \begin{equation}
    \label{eq:lowest_reg_limit_2}
    \lim_{h \rightarrow 0} \Big\{
    \inf_{\bm{v}_h \in \bm{X}_h^v}
    \Big[
    \vnorm{\bm{u} - \bm{v}_h}
    + \gsnorm{v_h}
    \Big] \Big\}= 0.
  \end{equation}
  Using \cref{eq:lowest_reg_limit_2} in \cref{thm:main_u_estimate} we
  see that $\vnorm{\bm{u} - \bm{u}_h} \rightarrow 0$ as
  $h \rightarrow 0$.
\end{remark}

We now investigate convergence of the velocity in the $L^2$-norm, by
means of the Aubin--Nitsche trick.  In order to proceed we assume the
domain $\Omega$ is such that the following regularity holds
(see e.g.~\cite{dauge_stokesreg2}).

\begin{assumption}[Regularity of the reduced Stokes problem]
  \label{asm:dual_reg}
  Let $s_0 \in [0, 1]$ be fixed.  We assume that for all
  $g \in L^2(\Omega)^d$ there holds
  $\phi_g \in H^{1+s_0}(\Omega)^d$ and
  \begin{align*}
    \norm{\phi_g}_{1+s_0} \lesssim_{s_0} \norm{g}
  \end{align*}
  where $\phi_g \in V$ is the solution to the reduced Stokes problem
  \begin{align*}
    a(\phi_g, v) = (g, v) \qquad \fra v \in V.
  \end{align*}
\end{assumption}

\begin{theorem}[Velocity error in the $L^2$-norm]
  \label{thm:v_conv_l2}
  In addition to the assumptions of \cref{coro:conv_result} and under
  \cref{asm:dual_reg} we have
  \begin{subequations}
  \begin{align}
  	\norm{u - u_h}
  	&\lesssim_{s_0} h^{s_0}
  	\Big\{
  	\inf_{\bm{v}_h \in \bm{X}_h^v}
  	\Big[
  	\vnorm{\bm{u} - \bm{v}_h}
  	+ \gsnorm{v_h}
  	\Big]
  	+ \frac{1}{\nu} \osc{\mathbb{P}f} \Big\} \label{eq:l2_est_1} \\
  	&\lesssim_s h^{s + s_0} \norm{u}_{1+s}
  	+ h^{1 + s_0} \norm{\frac{1}{\nu} \mathbb{P}f}. \label{eq:l2_est_2}
  \end{align}
  \end{subequations}
\end{theorem}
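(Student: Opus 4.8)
The plan is to run the Aubin--Nitsche duality argument, exploiting the self-adjointness of the Stokes problem so that the entire primal machinery (\cref{lemma:a_h_boundedness}, \cref{lemma:a_h_consistency_error}, \cref{thm:main_u_estimate} and \cref{coro:conv_result}) can be reused verbatim for the dual problem. First I would set $g := u - u_h$. Since $u \in V$ and $\bm{u}_h \in \bm{V}_h^v$ we have $u, u_h \in L_{\sigma}^2(\Omega)$, hence $g \in L_{\sigma}^2(\Omega)$ and $\mathbb{P} g = g$. I introduce the dual solution $\phi_g \in V$ solving $a(\phi_g, v) = (g, v)$ for all $v \in V$, together with its discrete counterpart $\bm{\phi}_{g,h} \in \bm{V}_h^v$ solving $a_h(\bm{\phi}_{g,h}, \bm{v}_h) = (g, v_h)$ for all $\bm{v}_h \in \bm{V}_h^v$, and I write $\bm{\phi}_g = (\phi_g, \phi_g)$. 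The key observation is that this dual pair is precisely the reduced Stokes problem and its discretisation with viscosity $\nu = 1$ and body force $g$; thus \cref{asm:dual_reg} gives $\norm{\phi_g}_{1+s_0} \lesssim_{s_0} \norm{g}$, while \cref{thm:main_u_estimate}, \cref{lemma:interp_error} and $\osc{\mathbb{P}g} = \osc{g} \le h \norm{g}$ together give $\vnorm{\bm{\phi}_g - \bm{\phi}_{g,h}} \lesssim_{s_0} h^{s_0} \norm{u - u_h}$.

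The heart of the argument is the identity $\norm{u - u_h}^2 = (g, u - u_h)$. Using $u \in V$ in the dual problem gives $(g, u) = a(\phi_g, u)$, and using $\bm{u}_h \in \bm{V}_h^v$ in the discrete dual problem gives $(g, u_h) = a_h(\bm{\phi}_{g,h}, \bm{u}_h)$. Because $\bm{\phi}_g$ and $\bm{u}$ are conforming, every facet contribution in $a_h$ vanishes and $a_h(\bm{\phi}_g, \bm{u}) = a(\phi_g, u)$, so I may rewrite $\norm{u - u_h}^2 = a_h(\bm{\phi}_g, \bm{u}) - a_h(\bm{\phi}_{g,h}, \bm{u}_h)$. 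Inserting intermediate terms and using the symmetry of $a_h$ on $\bm{X}^v(h)$, I split this as
\[
  \norm{u - u_h}^2 = a_h(\bm{\phi}_g - \bm{\phi}_{g,h}, \bm{u} - \bm{u}_h) + a_h(\bm{u} - \bm{u}_h, \bm{\phi}_{g,h}) + a_h(\bm{u}_h, \bm{\phi}_g - \bm{\phi}_{g,h}).
\]

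Each term is then bounded separately. The first is handled by boundedness (\cref{lemma:a_h_boundedness}), giving $\vnorm{\bm{\phi}_g - \bm{\phi}_{g,h}} \vnorm{\bm{u} - \bm{u}_h}$. The second is a primal consistency term with test function $\bm{\phi}_{g,h} \in \bm{V}_h^v$, so taking the infimum over $\bm{v}_h$ in \cref{lemma:a_h_consistency_error} bounds it by $A \cdot \fsnorm{\bm{\phi}_{g,h}}$, where $A := \inf_{\bm{v}_h \in \bm{X}_h^v}[\vnorm{\bm{u} - \bm{v}_h} + \gsnorm{v_h}] + \tfrac{1}{\nu}\osc{\mathbb{P}f}$ is the primal right-hand side of \cref{eq:l2_est_1}. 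The third is the dual consistency term, and this is the step I expect to be the main obstacle: it requires applying \cref{lemma:a_h_consistency_error} to the \emph{dual} problem, which is legitimate precisely because that problem is a reduced Stokes problem (with $\nu = 1$, source $g$, solution $\phi_g$, discrete solution $\bm{\phi}_{g,h}$). This yields a bound $B \cdot \fsnorm{\bm{u}_h}$, where $B$ is the dual analogue of $A$ and where $\osc{\mathbb{P}g} = \osc{g} \le h\norm{g}$ is the higher-order data-oscillation term that makes $B \lesssim_{s_0} h^{s_0}\norm{u - u_h}$.

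To close the estimate I use that $\bm{u}$ and $\bm{\phi}_g$ are conforming, so $\fsnorm{\bm{u}_h} = \fsnorm{\bm{u}_h - \bm{u}} \le \vnorm{\bm{u} - \bm{u}_h}$ and $\fsnorm{\bm{\phi}_{g,h}} = \fsnorm{\bm{\phi}_{g,h} - \bm{\phi}_g} \le \vnorm{\bm{\phi}_g - \bm{\phi}_{g,h}}$. Combining these with $\vnorm{\bm{u} - \bm{u}_h} \lesssim A$ (\cref{thm:main_u_estimate}) and the dual bounds above shows that all three terms are $\lesssim_{s_0} h^{s_0}\norm{u - u_h}\, A$. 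Dividing by $\norm{u - u_h}$ then gives \cref{eq:l2_est_1}. Finally, \cref{eq:l2_est_2} follows by estimating $A$ with \cref{lemma:interp_error} together with the data-oscillation bound $\tfrac{1}{\nu}\osc{\mathbb{P}f} \le h\,\norm{\tfrac{1}{\nu}\mathbb{P}f}$ from \cref{coro:conv_result}.
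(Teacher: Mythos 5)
Your proposal is correct and follows essentially the same path as the paper's proof: the same Aubin--Nitsche duality argument with the dual pair $(\phi_g, \bm{\phi}_{g,h})$ viewed as a reduced Stokes problem with unit viscosity and source $u - u_h$, the same three-term splitting (identical to the paper's up to the symmetry of $a_h$), the same use of \cref{lemma:a_h_boundedness}, \cref{lemma:a_h_consistency_error}, \cref{thm:main_u_estimate} and \cref{asm:dual_reg}, and the same closing step via $\fsnorm{\bm{u}_h} \le \vnorm{\bm{u} - \bm{u}_h}$ and $\fsnorm{\bm{\phi}_{g,h}} \le \vnorm{\bm{\phi}_g - \bm{\phi}_{g,h}}$. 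Your observation that $\mathbb{P}(u - u_h) = u - u_h$ since $u - u_h \in L^2_{\sigma}(\Omega)$ makes explicit what the paper uses implicitly when bounding the dual data-oscillation term.
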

\begin{proof}
  Let $\phi \in V, \bm{\phi}_h \in \bm{V}_h^v$ solve the reduced
  problems
  \begin{subequations}
    \label{eq:phi_problems}
    \begin{alignat}{2}
      a(\phi, v) &= (u - u_h, v)  \qquad &&\fra v \in V,
      \label{eq:phi_cts} \\
      a_h(\bm{\phi}_h, \bm{v}_h) &= (u - u_h, v_h) \qquad &&\fra \bm{v}_h \in \bm{V}_h^v.
      \label{eq:phi_discrete}
    \end{alignat}
  \end{subequations}
  Set $\bm{\phi} = (\phi, \phi)$.  By \cref{asm:dual_reg} we have
  $\phi \in H^{1+{s_0}}(\Omega)^d$ and
  $\norm{\phi}_{1+s_0} \lesssim_{s_0} \norm{u - u_h}$.  Applying
  \cref{coro:conv_result} to the reduced problems
  \cref{eq:phi_problems} (for which the source term is $u - u_h$ and
  the viscosity is one), we find that
  \begin{align}
    \label{eq:phi_v_err_bound}
    \begin{split}
      \vnorm{\bm{\phi} - \bm{\phi}_h} &\lesssim_{s_0}
      h^{s_0} \norm{\phi}_{1+s_0} + \osc{\mathbb{P}(u - u_h)} \\
      &\leq h^{s_0}  \norm{\phi}_{1+s_0} + h \norm{u - u_h} \\
      &\lesssim_{s_0} h^{s_0} \norm{u - u_h}.
    \end{split}
  \end{align}

  Using \crefrange{eq:phi_cts}{eq:phi_discrete} and some algebraic
  manipulations, we have
  \begin{align}
    \label{eq:l2_three_terms}
    \begin{split}
      \norm{u - u_h}^2
      &= (u - u_h, u) - (u - u_h, u_h) \\
      &= a(\phi, u) - a_h(\bm{\phi_h}, \bm{u}_h) \\
      &= a_h(\bm{\phi}, \bm{u}) - a_h(\bm{\phi_h}, \bm{u}_h) \\
      &= \underbrace{a_h(\bm{u} - \bm{u}_h, \bm{\phi} - \bm{\phi}_h)}_{I_1}
      + \underbrace{a_h(\bm{\phi} - \bm{\phi}_h, \bm{u}_h)}_{I_2}
      + \underbrace{a_h(\bm{u} - \bm{u}_h, \bm{\phi}_h)}_{I_3}.
    \end{split}
  \end{align}
  To bound $I_1$ we use \cref{lemma:a_h_boundedness} and
  \cref{eq:phi_v_err_bound}:
  \begin{equation}
    \label{eq:l2_I1_bound}
    I_1 \lesssim \vnorm{\bm{u} - \bm{u}_h} \vnorm{\bm{\phi} - \bm{\phi}_h}
    \lesssim_{s_0} h^{s_0} \norm{u - u_h} \vnorm{\bm{u} - \bm{u}_h}.
  \end{equation}
  To bound $I_2$ we use \cref{lemma:a_h_consistency_error},
  \cref{lemma:interp_error} and \cref{asm:dual_reg}.  This yields
  \begin{align}
    \label{eq:l2_I2_bound}
    \begin{split}
      I_2 &\lesssim
      \Big\{
      \inf_{\bm{v}_h \in \bm{X}_h^v}
      \Big[
      \vnorm{\bm{\phi} - \bm{v}_h}
      + \gsnorm{v_h}
      \Big]
      + \osc{\mathbb{P}(u - u_h)}
      \Big\} \fsnorm{\bm{u}_h} \\
      &\lesssim_{s_0}
      \Big\{
      h^{s_0} \norm{\phi}_{1+s_0}
      + \osc{\mathbb{P}(u - u_h)}
      \Big\} \fsnorm{\bm{u}_h} \\
      &\lesssim_{s_0} h^{s_0} \norm{u - u_h} \fsnorm{\bm{u}_h} \\
      &\leq h^{s_0} \norm{u - u_h} \vnorm{\bm{u} - \bm{u}_h}.
    \end{split}
  \end{align}
  To bound $I_3$ we again use \cref{lemma:a_h_consistency_error} along
  with \cref{eq:phi_v_err_bound}. We find
  \begin{align}
    \label{eq:l2_I3_bound}
    \begin{split}
      I_3 &\lesssim
      \Big\{
      \inf_{\bm{v}_h \in \bm{X}_h^v}
      \Big[
      \vnorm{\bm{u} - \bm{v}_h}
      + \gsnorm{v_h}
      \Big]
      + \frac{1}{\nu} \osc{\mathbb{P}f}
      \Big\} \fsnorm{\bm{\phi}_h} \\
      &\leq
      \Big\{
      \inf_{\bm{v}_h \in \bm{X}_h^v}
      \Big[
      \vnorm{\bm{u} - \bm{v}_h}
      + \gsnorm{v_h}
      \Big]
      + \frac{1}{\nu} \osc{\mathbb{P}f}
      \Big\} \vnorm{\bm{\phi} - \bm{\phi}_h} \\
      &\lesssim_{s_0}
      h^{s_0} \norm{u - u_h}
      \Big\{
      \inf_{\bm{v}_h \in \bm{X}_h^v}
      \Big[
      \vnorm{\bm{u} - \bm{v}_h}
      + \gsnorm{v_h}
      \Big]
      + \frac{1}{\nu} \osc{\mathbb{P}f}
      \Big\}.
    \end{split}
  \end{align}
  Using the bounds \crefrange{eq:l2_I1_bound}{eq:l2_I3_bound} in
  \cref{eq:l2_three_terms}, and using \cref{thm:main_u_estimate} to
  bound $\vnorm{\bm{u} - \bm{u}_h}$, we obtain
  \begin{equation}
    \label{eq:l2_three_terms_done}
    \norm{u - u_h}^2
    \lesssim_{s_0}
    h^{s_0} \norm{u - u_h}
    \Big\{
    \inf_{\bm{v}_h \in \bm{X}_h^v}
    \Big[
    \vnorm{\bm{u} - \bm{v}_h}
    + \gsnorm{v_h}
    \Big]
    + \frac{1}{\nu} \osc{\mathbb{P}f}
    \Big\}.
  \end{equation}
  Dividing \cref{eq:l2_three_terms_done} by $\norm{u - u_h}$ we obtain
  \cref{eq:l2_est_1}.  Finally, \cref{eq:l2_est_2} follows from
  \cref{eq:l2_est_1} and \cref{lemma:interp_error}. \qed
\end{proof}

%------------------------------------------------------------------------------
\subsection{Pressure error estimate}
\label{sec:pressure_error}

Let $(u, p) \in H_0^1(\Omega)^d \times L_0^2(\Omega)$ 
be the solution of \cref{eq:stokes_wf} and
$(\bm{u}_h, \bm{p}_h) \in \bm{X}_h^v \times \bm{Q}_h^p$ the solution of
\cref{eq:stokes_wf_discrete}.  Let $\pi_h : L^2(\Omega) \rightarrow Q_h$
be the $L^2$-orthogonal projector onto $Q_h$.  Note that $(\pi_h p -
p_h) \in Q_h$ and therefore $(p - \pi_h p, \pi_h p - p_h) = 0$.  Hence
by the Pythagorean theorem, the pressure error can be decomposed as
\begin{align} \label{eq:pressure_error_decomp}
\begin{split}
	\norm{p - p_h}^2
	&= \norm{(p - \pi_h p) + (\pi_h p -  p_h)}^2
	\\
	&= \norm{p - \pi_h p}^2 + \norm{\pi_h p -  p_h}^2.
\end{split}
\end{align}
The term $\norm{p - \pi_h p}$ is the best approximation error of $p$
by functions in the discrete space $Q_h$ under the $L^2$-norm, and is
unavoidably pressure-dependent.  However, the following theorem shows
that the second term $\norm{\pi_h p - p_h}$ can be bounded above by an
error that is dependent on the velocity only.

\begin{theorem}[Pressure error]
  \label{thm:pressure_error}
  Let $(u, p) \in H_0^1(\Omega)^d \times L_0^2(\Omega)$ 
  solve \cref{eq:stokes_wf} and set
  $\bm{u} = (u, u)$.  Let
  $(\bm{u}_h, \bm{p}_h) \in \bm{X}_h^v \times \bm{Q}_h^p$ solve
  \cref{eq:stokes_wf_discrete}.  Then
  \begin{align}
    \begin{split} \label{eq:p_estimate}
      \norm{\pi_h p -  p_h}
      \lesssim \Big\{
      \nu \inf_{\bm{v}_h \in \bm{X}_h^v}
      \Big[
      \vnorm{\bm{u} - \bm{v}_h}
      + \gsnorm{v_h}
      \Big]
      + \osc{\mathbb{P}f}
      \Big\}.
    \end{split}
  \end{align}
\end{theorem}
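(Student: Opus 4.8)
The plan is to use the discrete inf-sup condition \cref{eq:b_h_infsup} with the test pressure pair $\bm{q}_h = (\pi_h p - p_h, 0)$. Since $\pi_h$ preserves the zero mean we have $\pi_h p - p_h \in Q_h$, and because the facet component is zero, $\pnorm{\bm{q}_h} = \norm{\pi_h p - p_h}$. The inf-sup bound then reduces the theorem to estimating, for each fixed $\bm{v}_h \in \bm{X}_h^v$, the numerator $b_h(v_h, (\pi_h p - p_h, 0))$ by a constant times the right-hand side of \cref{eq:p_estimate} multiplied by $\vnorm{\bm{v}_h}$. A first simplification comes from \cref{eq:b_h_def}: the facet component of the test pressure vanishes, and since $\nabla \cdot v_h$ is element-wise constant while $\pi_h$ is the $L^2$-projection onto element-wise constants, we have $\int_K (\nabla \cdot v_h)(\pi_h p - p)\dif x = 0$ on each cell, so the numerator equals $-\sum_{K}\int_K (\nabla\cdot v_h)(p - p_h)\dif x$.

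The core of the proof is to turn this expression into $-\nu a_h(\bm{u} - \bm{u}_h, \bm{v}_h)$ plus a surface remainder. First I would use the discrete momentum equation \cref{eq:stokes_wf_discrete} to eliminate $(f, v_h)$ and the element pressure $p_h$ (integrating the $p_h$-term by parts, which is legitimate since $p_h$ is element-wise constant). In parallel, I would test the strong (distributional) form $-\nu\Updelta u + \nabla p = f$ against $v_h$ element by element and integrate by parts; this introduces the exact normal traction $t_K := \nu(\nabla u) n_K - p n_K$ on each $\partial K$. Substituting the definition \cref{eq:a_h_def2} of the extended form $a_h$ and using that $\bm{u} = (u,u)$ has vanishing jump $u - \bar{u} = 0$, the element-gradient integrals assemble into $-\nu a_h(\bm{u} - \bm{u}_h, \bm{v}_h)$. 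The remaining surface terms I would handle via the continuity of the exact traction across interior faces — a consequence of $f \in L^2(\Omega)^d$, so that $t_{K_1} = -t_{K_2}$ on shared interior faces — together with $\bar{v}_h|_{\partial\Omega} = 0$ and the single-valuedness of $\bar v_h$ and $\bar p_h$ on interior faces. These facts let every surface integral be rewritten as a pairing of $(v_h - \bar{v}_h)$ (controlled by $\fsnorm{\bm{v}_h}$) against a normal-traction remainder, yielding an identity $b_h(v_h, (\pi_h p - p_h, 0)) = -\nu a_h(\bm{u} - \bm{u}_h, \bm{v}_h) + S(\bm{v}_h)$ with $\abs{S(\bm{v}_h)} \lesssim \fsnorm{\bm{v}_h}\,D$, where $D$ measures the oscillation of the normal traction.

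The first term is immediate: by \cref{lemma:a_h_boundedness}, $\abs{\nu a_h(\bm{u} - \bm{u}_h, \bm{v}_h)} \lesssim \nu \vnorm{\bm{u} - \bm{u}_h}\vnorm{\bm{v}_h}$, and by \cref{thm:main_u_estimate} the factor $\nu$ cancels the $1/\nu$ in front of the data oscillation, giving $\nu \vnorm{\bm{u} - \bm{u}_h} \lesssim \nu \inf_{\bm{v}_h \in \bm{X}_h^v}[\vnorm{\bm{u} - \bm{v}_h} + \gsnorm{v_h}] + \osc{\mathbb{P}f}$. This is precisely why $\osc{\mathbb{P}f}$ enters \cref{eq:p_estimate} \emph{without} a factor of $1/\nu$, and it is the mechanism behind the pressure-robustness of the bound; thus the $-\nu a_h(\bm{u} - \bm{u}_h, \bm{v}_h)$ contribution already matches the right-hand side of \cref{eq:p_estimate}.

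The hard part will be the surface remainder $S(\bm{v}_h)$, the new term produced by hybridization. Its two constituents are the oscillation of the viscous normal derivative, $\nu(\nabla u - \pi_K \nabla u)n_K$, and the discrepancy between the discrete facet pressure $\bar p_h$ and the exact pressure traction on $\partial K$. Under the minimal regularity $u \in H^{1+s}$ with possibly $s \le 1/2$, neither $\nabla u$ nor $p$ need admit $L^2$-traces on $\partial K$, so $S$ cannot be bounded by a naive element-wise trace inequality; instead one must keep the normal traction as a single object (well defined because $f \in L^2$) and estimate it against $(v_h - \bar{v}_h)$ using the same data-oscillation and broken-gradient-jump machinery as in the proof of \cref{lemma:a_h_consistency_error}. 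Carrying this out shows $D \lesssim \nu \inf_{\bm{v}_h \in \bm{X}_h^v}[\vnorm{\bm{u} - \bm{v}_h} + \gsnorm{v_h}] + \osc{\mathbb{P}f}$. Combining the two contributions, dividing by $\vnorm{\bm{v}_h}$, and taking the supremum over $\bm{v}_h \in \bm{X}_h^v \setminus \{\bm{0}\}$ then yields \cref{eq:p_estimate}.
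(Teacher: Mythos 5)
Your opening steps are fine (the reduction of the inf-sup numerator using that $\nabla\cdot v_h$ is elementwise constant is correct, as is the treatment of the $\nu a_h(\bm{u}-\bm{u}_h,\cdot)$ term via \cref{lemma:a_h_boundedness} and \cref{thm:main_u_estimate}), but your overall route differs from the paper's in exactly the place where it breaks. The paper does not use the full inf-sup condition \cref{eq:b_h_infsup}: it uses the auxiliary inf-sup condition of \cite[Lemma~5]{rhebergen2020embedded}, where the supremum runs only over $\bm{w}_h \in X_h^{\mathrm{BDM}} \times (\bar{X}_h \cap C^0(\Gamma_0)^d)$. This restriction is essential for two reasons. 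First, for $w_h \in X_h^{\mathrm{BDM}}$ one has $\jump{w_h}|_F\cdot n_F = 0$ on every face, so $b_h(w_h,\bm{p}_h) = -(p_h,\nabla\cdot w_h)$ and the facet pressure $\bar p_h$ never appears. In your setup the discrete momentum equation is tested with a generic $v_h \in X_h$, so $b_h(v_h,\bm{p}_h)$ contains $\sum_{K}\int_{\partial K}(v_h\cdot n_K)\,\bar p_h \dif s$, and $\bar p_h$ is an unknown for which no bound is available at this point (the theorem asserts nothing about it); your surface remainder therefore contains an uncontrolled quantity. Second, restricting to $X_h^{\mathrm{BDM}}$ makes the enrichment operator $E_h$ of \cref{lemma:enrichment} available, and the paper's key identity is $(\pi_h p, \nabla\cdot w_h) = (p, \nabla\cdot E_h w_h) = -b(E_h w_h, p) = -(f, E_h w_h) + \nu a(u, E_h w_h)$: the \emph{weak} continuous problem \cref{eq:stokes_wf} is tested with the conforming function $E_h w_h \in H_0^1(\Omega)^d$, so no trace of $\nabla u$ or $p$ on element boundaries is ever formed.

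Your proposal instead tests the strong form $-\nu\Updelta u + \nabla p = f$ elementwise and integrates by parts, producing the exact traction $t_K = \nu(\nabla u)n_K - p\,n_K$. You correctly note that for $s \le 1/2$ this traction has no $L^2(\partial K)$ trace, but the proposed fix --- keep it as a single object and apply ``the machinery of \cref{lemma:a_h_consistency_error}'' --- is not an argument. The traction is only an element of $H^{-1/2}(\partial K)$; extracting a factor $\fsnorm{\bm{v}_h}$ (an $h_K^{-1/2}$-weighted $L^2$ facet norm) from the pairing $\langle t_K, v_h - \bar v_h\rangle_{\partial K}$ would require precisely the $L^2$ trace you conceded does not exist, while a duality bound through $\norm{t_K}_{H^{-1/2}(\partial K)}$ yields non-decaying terms of size $\nu\norm{\nabla u}_K + \norm{p}_K$ rather than anything $O(h^s)$. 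The machinery of \cref{lemma:a_h_consistency_error} cannot rescue this, because that machinery \emph{is} the enrichment operator: the lemma never forms tractions of the exact solution; its design (replacing $\bm{w}_h$ by $(E_h w_h, E_h w_h)$ and invoking the reduced problems \cref{eq:stokes_wf_reduced_helmholtz} and \cref{eq:stokes_wf_reduced_discrete_helmholtz}) exists precisely to avoid them. A further symptom: in your decomposition $f$ cancels exactly between the strong form and the discrete equation, so $\osc{\mathbb{P}f}$ can enter only through $\nu\vnorm{\bm{u}-\bm{u}_h}$; in the paper it also arises directly from $I_1 = (f, w_h - E_h w_h) = (\mathbb{P}f, w_h - E_h w_h)$, which uses $w_h - E_h w_h \in L_{\sigma}^2(\Omega)$ --- a step with no counterpart in your outline. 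The gap is thus the central one: your claimed bound $D \lesssim \nu\inf_{\bm{v}_h \in \bm{X}_h^v}\big[\vnorm{\bm{u}-\bm{v}_h} + \gsnorm{v_h}\big] + \osc{\mathbb{P}f}$ is asserted, not proved, and the traction-based route you describe cannot prove it in the minimal-regularity regime that is the whole point of the theorem.
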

\begin{proof}
  Set $r_h := (\pi_h p - p_h) \in Q_h$.  We utilize the auxiliary
  inf-sup condition established in \cite[Lemma
  5]{rhebergen2020embedded}, which tells us that
  \begin{align}
    \label{eq:pressure_inf_sup}
    \norm{r_h} \lesssim
    \sup_{\bm{w}_h \in X_h^{\mathrm{BDM}} \times (\bar{X}_h \cap
    C^0(\Gamma_0)^d)}
    \frac{(r_h, \nabla \cdot w_h)}{\vnorm{\bm{w}_h}}.
  \end{align}
  Consider
  $\bm{w}_h \in X_h^{\mathrm{BDM}} \times (\bar{X}_h \cap C^0(\Gamma_0)^d)$.
  Then $\jump{w_h}|_F \cdot n_F = 0$ for all $F \in \mathcal{F}_h$ so that
  \begin{equation}
    \label{eq:pressure_b_1}
    -(p_h, \nabla \cdot w_h) = b_h(w_h, \bm{p}_h)
    = (f, w_h) - \nu a_h(\bm{u}_h, \bm{w}_h).
  \end{equation}
  On the other hand, since
  $\nabla \cdot w_h = \nabla \cdot E_h w_h \in Q_h$ we have
  \begin{align}
    \label{eq:pressure_b_2}
    \begin{split}
      (\pi_h p, \nabla \cdot w_h) &= (p, \nabla \cdot E_h w_h) \\
      &= -b(E_h w_h, p) \\
      &= -(f, E_h w_h) + \nu a(u, E_h w_h) \\
      &= -(f, E_h w_h) + \nu a_h(\bm{u}, (E_h w_h, E_h w_h)).
    \end{split}
  \end{align}
  Set $\bm{z}_h = \bm{w}_h - (E_h w_h, E_h w_h)$.  Combining
  \cref{eq:pressure_b_1} and \cref{eq:pressure_b_2} we obtain
  \begin{align}
    \label{eq:pressure_three_terms}
    \begin{split}
      (r_h, \nabla \cdot w_h)
      = \ & (f, w_h - E_h w_h) \\
      \ & - \nu a_h(\bm{u}_h, \bm{w}_h)
      + \nu a_h(\bm{u}, (E_h w_h, E_h w_h)) \\
      = \ & \underbrace{(f, w_h - E_h w_h)}_{I_1} \\
      \ & + \nu \underbrace{a_h(\bm{u} - \bm{u}_h, (E_h w_h, E_h w_h))}_{I_2}
      - \nu \underbrace{a_h(\bm{u}_h, \bm{z}_h)}_{I_3}.
    \end{split}
  \end{align}
  Since $w_h \in X_h^{\mathrm{BDM}}$ and $\nabla \cdot (w_h - E_h w_h) = 0$ we
  have that $(w_h - E_h w_h) \in L_{\sigma}^2(\Omega)$ 
  (recall \cref{eqn:Lsigma}).
  As a result,
  $I_1 = (\mathbb{P} f, w_h - E_h w_h)$.  Applying the Cauchy--Schwarz
  inequality and \cref{itm:e_h_interp} of \cref{lemma:enrichment} with $k=0$
  therefore yields
  \begin{equation}
    \label{eq:pressure_I_1_bound}
    \abs{I_1} \lesssim \osc{\mathbb{P} f} \jsnorm{w_h} \lesssim \osc{\mathbb{P} f} \vnorm{\bm{w}_h}.
  \end{equation}
  A bound for $\abs{I_2}$ follows from \cref{lemma:a_h_boundedness}
  and \cref{itm:e_h_bounded} of \cref{lemma:enrichment}:
  \begin{align}
    \label{eq:pressure_I_2_bound}
    \begin{split}
      \abs{I_2} &\lesssim \vnorm{\bm{u} - \bm{u}_h} \vnorm{(E_h w_h, E_h w_h)} \\
      &= \vnorm{\bm{u} - \bm{u}_h} \dgnorm{E_h w_h} \\
      &\lesssim \vnorm{\bm{u} - \bm{u}_h} \vnorm{\bm{w}_h}.
    \end{split}
  \end{align}
  Also, the same arguments used in \cref{lemma:a_h_consistency_error}
  show that
  \begin{equation}
    \label{eq:pressure_I_3_bound_1}
    \abs{I_3} \lesssim \Big[\vnorm{\bm{u} - \bm{u}_h} + \gsnorm{u_h} \Big] \vnorm{\bm{w}_h}.
  \end{equation}
  But for any $\bm{v}_h \in \bm{X}_h^v$, the triangle inequality and a
  discrete trace inequality yields
  \begin{align}
    \label{eq:pressure_I_3_bound_2}
    \begin{split}
      \gsnorm{u_h} &\leq \gsnorm{u_h - v_h} + \gsnorm{v_h} \\
      &\lesssim \norm{\nabla_h(u_h - v_h)} + \gsnorm{v_h} \\
      &\leq \vnorm{\bm{u}_h - \bm{v}_h} + \gsnorm{v_h} \\
      &\leq \vnorm{\bm{u} - \bm{u}_h} + \Big[ \vnorm{\bm{u} - \bm{v}_h} + \gsnorm{v_h} \Big].
    \end{split}
  \end{align}
  Combining \cref{eq:pressure_I_3_bound_1} and
  \cref{eq:pressure_I_3_bound_2} gives
  \begin{align}
    \label{eq:pressure_I_3_bound}
    \abs{I_3} \lesssim \Big\{
    \vnorm{\bm{u} - \bm{u}_h}
    + \inf_{\bm{v}_h \in \bm{X}_h^v}
    \Big[
    \vnorm{\bm{u} - \bm{v}_h}
    + \gsnorm{v_h}
    \Big] \Big\} \vnorm{\bm{w}_h}.
  \end{align}
  Inserting the bounds
  \cref{eq:pressure_I_1_bound,eq:pressure_I_2_bound,eq:pressure_I_3_bound}
  into \cref{eq:pressure_three_terms}, and using
  \cref{thm:main_u_estimate} to bound $\vnorm{\bm{u} - \bm{u}_h}$, we
  obtain
  \begin{align}
    \label{eq:pressure_three_terms_done}
    (r_h, \nabla \cdot w_h) \lesssim
    \Big\{
    \nu \inf_{\bm{v}_h \in \bm{X}_h^v}
    \Big[
    \vnorm{\bm{u} - \bm{v}_h}
    + \gsnorm{v_h}
    \Big]
    + \osc{\mathbb{P} f}
    \Big\} \vnorm{\bm{w}_h}.
  \end{align}
  Finally, combining \cref{eq:pressure_three_terms_done} and the
  inf-sup condition \cref{eq:pressure_inf_sup} we get
  \begin{align*}
    \norm{r_h}
    \lesssim
    \Big\{
    \nu \inf_{\bm{v}_h \in \bm{X}_h^v}
    \Big[
    \vnorm{\bm{u} - \bm{v}_h}
    + \gsnorm{v_h}
    \Big]
    + \osc{\mathbb{P} f}
    \Big\},
  \end{align*}
  which is the desired result.  \qed
\end{proof}

\begin{corollary}[Pressure convergence rate] \label{coro:pressure_rate}
In addition to the assumptions of \cref{thm:pressure_error}, assume that
$(u, p) \in H^{1+s}(\Omega)^d \times H^{s}(\Omega)$ for some
$s \in [0, 1]$.  Then
\begin{equation} \label{eq:pressure_rate}
	\norm{p - p_h} \lesssim_s
	h^{s} \norm{p}_{s} +
	\nu h^{s} \norm{u}_{1 + s} + h \norm{\mathbb{P} f}.
\end{equation}
\end{corollary}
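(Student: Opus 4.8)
The plan is to use the orthogonal splitting of the pressure error already recorded in \cref{eq:pressure_error_decomp}, which by the triangle inequality gives $\norm{p - p_h} \leq \norm{p - \pi_h p} + \norm{\pi_h p - p_h}$, and to estimate the two summands independently. The second summand is precisely the quantity controlled by \cref{thm:pressure_error}, so the bulk of the argument is simply to feed results established earlier in the paper through that estimate; the first summand is a pure best-approximation term treated by standard fractional approximation theory.

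First I would bound the projection error $\norm{p - \pi_h p}$. Because $p \in L_0^2(\Omega)$ has vanishing mean, its unconstrained cell-wise $L^2$-projection onto piecewise constants already lies in $Q_h$ (the projection preserves each cell integral, hence the global integral, which is zero), so $\pi_h p$ coincides with this cell-wise projection. A cell-wise application of the fractional-order Bramble--Hilbert lemma \cite[Theorem~6.1]{dupont1980polynomial} then yields $\norm{p - \pi_h p}_K \lesssim_s h_K^s \norm{p}_{s,K}$ for each $K \in \mathcal{T}$ and $s \in [0,1]$; squaring, summing over the mesh, and using shape-regularity gives $\norm{p - \pi_h p} \lesssim_s h^s \norm{p}_s$. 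For $s = 0$ this reduces to the stability bound $\norm{\pi_h p} \leq \norm{p}$ of the $L^2$-projection.

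Next I would bound $\norm{\pi_h p - p_h}$ by post-processing the right-hand side of \cref{thm:pressure_error}. Applying \cref{lemma:interp_error} with $\psi = u$ and $\bm{\psi} = \bm{u}$ controls the infimum via $\inf_{\bm{v}_h \in \bm{X}_h^v}[\vnorm{\bm{u} - \bm{v}_h} + \gsnorm{v_h}] \lesssim_s h^s \norm{u}_{1+s}$, while the definition \cref{eq:osc_defn} gives directly $\osc{\mathbb{P}f} \leq h \norm{\mathbb{P}f}$. Inserting both into \cref{eq:p_estimate} produces $\norm{\pi_h p - p_h} \lesssim_s \nu h^s \norm{u}_{1+s} + h \norm{\mathbb{P}f}$. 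Combining this with the projection-error bound in the decomposition then gives the claimed estimate \cref{eq:pressure_rate}.

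I expect the only genuinely delicate point to be the first step: one must verify that the $h^s$ best-approximation rate for the $L^2$-projection onto piecewise constants really holds across the fractional range $s \in (0,1)$, and that the hidden constant depends only on $s$ and the shape-regularity of $\mathcal{T}$ rather than on $\Omega$ being Lipschitz, in the same spirit as was needed in the proof of \cref{lemma:interp_op}. Everything else is bookkeeping that assembles \cref{thm:pressure_error}, \cref{lemma:interp_error}, and the elementary oscillation bound.
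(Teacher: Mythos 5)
Your proposal is correct and follows essentially the same route as the paper's proof: both use the decomposition \cref{eq:pressure_error_decomp}, bound $\norm{p - \pi_h p} \lesssim_s h^s \norm{p}_s$ by approximation properties of the $L^2$-projector, and bound $\norm{\pi_h p - p_h}$ by inserting \cref{lemma:interp_error} (with $\psi = u$) and the elementary estimate $\osc{\mathbb{P}f} \leq h \norm{\mathbb{P}f}$ into \cref{eq:p_estimate}. The cell-wise fractional Bramble--Hilbert argument you spell out for the projection error is precisely what the paper compresses into a citation of standard approximation properties, and your observation that it needs only shape-regularity (not a Lipschitz domain) is sound, since the estimate is assembled cell by cell.
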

\begin{proof}
By standard approximation properties of the $L^2$-orthogonal projector,
\begin{equation} \label{eq:pressure_rate_1}
	\norm{p - \pi_h p} \lesssim_s h^{s} \norm{p}_{s}.
\end{equation}
On the other hand, combining \cref{thm:pressure_error} and
\cref{lemma:interp_error} we find that
\begin{equation}  \label{eq:pressure_rate_2}
	\norm{\pi_h p -  p_h} \lesssim_s \nu h^{s} \norm{u}_{1+s}
	+ h \norm{\mathbb{P} f}.
\end{equation}
Using the bounds \cref{eq:pressure_rate_1} and \cref{eq:pressure_rate_2}
in the decomposition \cref{eq:pressure_error_decomp} yields the desired result
in \cref{eq:pressure_rate}. \qed
\end{proof}

%------------------------------------------------------------------------------
\section{Numerical examples}
\label{sec:numerical_examples}

In this section we support our theoretical findings with numerical
examples.  Strictly speaking, the examples that we consider are outside
of the scope of our theory, because they involve inhomogenous Dirichlet
boundary conditions. Nevertheless, we will see that our numerical
observations agree with the theoretical predictions of
\cref{sec:error_analysis}.

All numerical examples have been implemented in NGSolve \cite{ngsolve}.
The penalty parameter is taken as $\alpha = 6k^2$ where $k$ is the polynomial
degree of the velocity finite element space.
We discuss numerical results only for the EDG--HDG method; our findings
for the HDG method are very similar in all cases.

%------------------------------------------------------------------------------
\subsection{Convergence under minimal regularity}
\label{sec:numerical_examples_mr}

We consider the Stokes problem on the unit square $\Omega = (0, 1)^2$
with $f = 0$ and $\nu = 1$.  We impose Dirichlet boundary conditions on
the discrete solution by interpolating the exact solution.  The exact
solution is taken from \cite[Example~4]{verfurth1989posteriori} and in
polar coordinates is given by
\begin{equation} \label{eq:ex_mr_sln}
  u = \frac{3}{2} \sqrt{r}
  \begin{bmatrix}
    \cos\left(\frac{\theta}{2}\right) - \cos\left(\frac{3\theta}{2}\right)
    \\[0.5em]
    3 \sin\left(\frac{\theta}{2}\right) - \sin\left(\frac{3 \theta}{2}\right)
  \end{bmatrix}, \quad
  p = -6r^{-1/2} \cos\left(\frac{\theta}{2}\right).
\end{equation}
We note that $(u, p) \in H^{1+s}(\Omega)^d \times H^s(\Omega)$ for all
$0 \leq s < 1/2$.

The computed velocity and pressure errors for the EDG--HDG method
using the lowest-order $P^1-P^0$ discretization and the $P^2-P^1$
discretization are shown in \cref{tab:min_reg_example}. Both
discretizations are seen to converge at the same rate.  The velocity
error in the discrete $H^1$-norm and the pressure error in the
$L^2$-norm are observed to converge as roughly $h^{1/2}$.  This is
consistent with the regularity of the exact solution and the
predictions of \cref{coro:conv_result} and
\cref{coro:pressure_rate}. Finally, the velocity error in the
$L^2$-norm is observed to converge as roughly $h^{3/2}$.  Because
$\Omega$ is convex and therefore \cref{asm:dual_reg} holds with
$s_0=1$ (see e.g.~\cite{kellogg1976regularity}), this observed
convergence rate is consistent with \cref{thm:v_conv_l2}.

\begin{table}
  \caption{ Computed errors for the minimal regularity test case of
    \cref{sec:numerical_examples_mr} using the EDG--HDG method with
    different polynomial orders.  In all cases, the discrete velocity
    solution is divergence-free up to machine precision.  }
  \begin{center}
    \begin{tabular}{cc|cc|cc|cc}
      \hline
      &&&\\[-1em] 	% Add a bit of vertical space to prevent text from hitting the \hline
      Degree & Cells & $\norm{u-u_h}$ & Rate & $\vnorm{\bm{u}-\bm{u}_h}$ & Rate
                                             & $\norm{p-p_h}$ & Rate \\
      \hline
      &&&\\[-1em] 	% Add a bit of vertical space to prevent text from hitting the \hline
      $P^1$--$P^0$
      &     24 & 7.2e-02 &   - & 1.5e+00 &   - & 5.8e+00 &   - \\
      &     96 & 2.2e-02 & 1.7 & 8.1e-01 & 0.9 & 1.2e+00 & 2.3 \\
      &    384 & 7.6e-03 & 1.5 & 5.9e-01 & 0.5 & 8.2e-01 & 0.5 \\
      &   1536 & 2.8e-03 & 1.5 & 4.2e-01 & 0.5 & 5.8e-01 & 0.5 \\
      &   6144 & 9.8e-04 & 1.5 & 3.0e-01 & 0.5 & 4.1e-01 & 0.5 \\
      \multicolumn{8}{c}{} \\
      $P^2$--$P^1$
      &     24 & 2.8e-02 &   - & 8.4e-01 &   - & 1.4e+00 &   - \\
      &     96 & 7.6e-03 & 1.9 & 4.0e-01 & 1.1 & 5.2e-01 & 1.4 \\
      &    384 & 2.7e-03 & 1.5 & 2.9e-01 & 0.5 & 3.7e-01 & 0.5 \\
      &   1536 & 9.5e-04 & 1.5 & 2.0e-01 & 0.5 & 2.6e-01 & 0.5 \\
      &   6144 & 3.4e-04 & 1.5 & 1.4e-01 & 0.5 & 1.8e-01 & 0.5 \\
      \hline
    \end{tabular}
    \label{tab:min_reg_example}
  \end{center}
\end{table}

%------------------------------------------------------------------------------
\subsection{Pressure-robust velocity approximation}
\label{sec:numerical_examples_pr}

To demonstrate pressure-robustness in the minimal regularity setting, we
consider a Stokes problem, taken from
\cite[Example~3]{verfurth1989posteriori}, on the L-shaped domain $\Omega
= (-1, 1)^2 \setminus ([0, 1] \times [-1, 0])$ and we vary the viscosity
$\nu$.  Consider
\begin{multline*}
  \psi(\theta) =
    \frac{1}{1 + \lambda} \sin((1 + \lambda) \theta) \cos(\lambda \omega) -
    \cos((1 + \lambda) \theta)
  \\
    - \frac{1}{1 - \lambda} \sin((1 - \lambda) \theta) \cos(\lambda \omega) +
    \cos((1 - \lambda) \theta),
\end{multline*}
and let $\lambda = 856399/1572864 \approx 0.54$ and $\omega = 3\pi/2$.
Our exact solution $(u, p)$ is given in polar coordinates by
\begin{equation*}
  u = r^{\lambda}
  \begin{bmatrix}
    (1 + \lambda) \sin(\theta) \psi(\theta) + \cos(\theta) \psi'(\theta) \\
    -(1 + \lambda) \cos(\theta) \psi(\theta) + \sin(\theta) \psi'(\theta)
  \end{bmatrix}, \quad
  p = \nu p_1 + p_2,
\end{equation*}
where
\begin{equation*}
  p_1 = r^{\lambda - 1} ((1 + \lambda)^2 \psi'(\theta) + \psi'''(\theta)) / (1 - \lambda), \quad
  p_2 = x^3 + y^3.
\end{equation*}
Note that $-\nabla^2 u + \nabla p_1 = 0$ and therefore
$-\nu \nabla^2 u + \nabla p = f$ where $f = \nabla p_2$.  Also, there
holds $(u, p) \in H^{1+s}(\Omega)^d \times H^s(\Omega)$ for all
$0 \leq s < \lambda$.

We compare the lowest-order EDG--HDG method to the lowest-order EDG
method of \cite{rhebergen2020embedded} (see also \cite{labeur:2012} on
the EDG method).  The EDG method, which uses a continuous facet finite
element space for both the velocity and pressure, is not
pressure-robust~\cite{rhebergen2020embedded}.  We set the viscosity to
be either $\nu = 1$ or $\nu = 10^{-5}$.  The computed velocity errors
for this example are shown in \cref{fig:pressure_robust_example}.

\begin{figure}
  \includegraphics{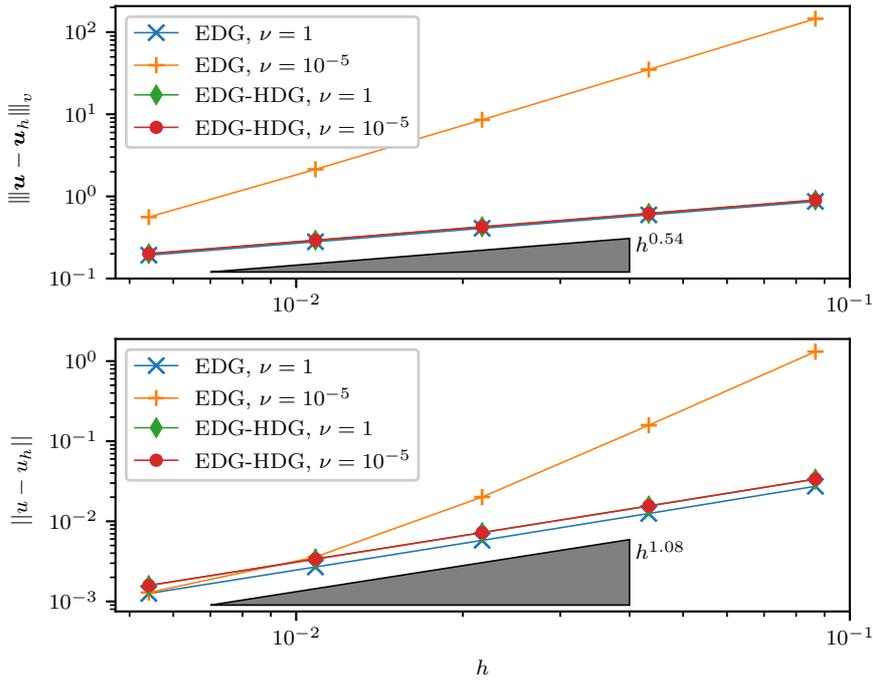}
  \caption{ Computed velocity errors for the pressure-robustness test
    case of \cref{sec:numerical_examples_pr}.  We compare the
    lowest-order EDG and EDG--HDG methods with $\nu=1$ and
    $\nu=10^{-5}$. }
  \label{fig:pressure_robust_example}
\end{figure}

For the EDG--HDG method, the velocity error is observed to be
independent of the viscosity, confirming pressure-robustness.  The
velocity error for this method converges in the discrete $H^1$-norm as
roughly $h^{0.54}$.  This is consistent with the regularity of $u$ and
\cref{coro:conv_result}.  Furthermore, according to
\cite[Section 5]{chorfi2014geometric}, on this domain \cref{asm:dual_reg} holds
with $s_0 \approx 0.54$.  Therefore, \cref{thm:v_conv_l2} predicts the velocity
error in the $L^2$-norm to converge as roughly $(h^{0.54})^2 = h^{1.08}$, which
is consistent with the empirical convergence rates displayed in
\cref{fig:pressure_robust_example}.

When $\nu = 1$ the velocity error for the EDG method is comparable to
that of the EDG--HDG method.  However, when $\nu = 10^{-5}$ the velocity
error for the EDG method increases substantially, at least in the regime
of large~$h$. In this regime, we hypothesize that the velocity error for
the EDG method is dominated by the pressure best approximation error
scaled by the inverse viscosity.  Recalling that $p = \nu p_1 + p_2$, we
can estimate the pressure best approximation error as
\begin{align}
  \label{eq:pbe_estimate}
  \begin{split}
    \inf_{q_h \in Q_h} \norm{p - q_h} &\leq \nu \inf_{q_h \in Q_h} \norm{p_1 - q_h}
    + \inf_{q_h \in Q_h} \norm{p_2 - q_h}
    \\
    &\lesssim_s \nu h^s \norm{p_1}_{s} + h^1 \norm{p_2}_{1},
  \end{split}
\end{align}
for any $0 \leq s < \lambda$.  For $h$ sufficiently large
\cref{eq:pbe_estimate} converges pre-asymptotically at a rate of $h^1$,
while the asymptotic convergence rate of \cref{eq:pbe_estimate}
is~$h^s$.  This behavior appears to be reflected in
\cref{fig:pressure_robust_example}, where for $\nu = 10^{-5}$ the
velocity error of the EDG method pre-asymptotically converges at a
faster rate than the EDG--HDG method.

%------------------------------------------------------------------------------
\subsection{Domain with a crack}
\label{sec:numerical_examples_crack}

\begin{table}
	\caption{Computed errors for the cracked domain test case of
		\cref{sec:numerical_examples_crack} using the lowest-order EDG--HDG
		method.  In all cases, the discrete velocity
		solution is divergence-free up to machine precision.  }
	\begin{center}
		\begin{tabular}{c|cc|cc|cc}
			\hline
			&&&\\[-1em] 	% Add a bit of vertical space to prevent text from
			%hitting the \hline
			Cells & $\norm{u-u_h}$ & Rate & $\vnorm{\bm{u}-\bm{u}_h}$
			& Rate
			& $\norm{p-p_h}$ & Rate \\
			\hline
			&&&\\[-1em] 	% Add a bit of vertical space to prevent text from
			%hitting the \hline
			1680 & 2.0e-03 & - & 4.5e-01 & - & 5.8e-01 & - \\
			6720 & 1.0e-03 & 1.0 & 3.2e-01 & 0.5 & 3.6e-01 & 0.7 \\
			26880 & 5.0e-04 & 1.0 & 2.3e-01 & 0.5 & 2.4e-01 & 0.6 \\
			107520 & 2.5e-04 & 1.0 & 1.6e-01 & 0.5 & 1.6e-01 & 0.6 \\
			430080 & 1.2e-04 & 1.0 & 1.1e-01 & 0.5 & 1.1e-01 & 0.5 \\
			\hline
		\end{tabular}
		\label{tab:crack_example}
	\end{center}
\end{table}

We consider the Stokes problem on $\Omega = (-1/10, 1/10)^2 \setminus
([0, 1/10) \times \{0\})$ with $f = 0$ and $\nu = 1$.  Notice that
$\Omega$ has a crack along the positive $x$-axis.  We use the same exact
solution from \cref{eq:ex_mr_sln}.  The computed velocity and pressure
errors for the lowest-order EDG--HDG method are shown in
\cref{tab:crack_example}.

The velocity error in the discrete $H^1$-norm and the pressure error in
the $L^2$-norm eventually both converge as roughly $h^{1/2}$.  This is
consistent with the regularity of the exact solution and the
predictions of \cref{coro:conv_result} and \cref{coro:pressure_rate}.
Furthermore, according to \cite[Section 5]{chorfi2014geometric}, on this domain
\cref{asm:dual_reg} holds for any $s_0 < 1/2$.
Therefore, \cref{thm:v_conv_l2} predicts the velocity
error in the $L^2$-norm to converge as roughly $h^{1}$, which is consistent
with the empirical convergence rate seen in \cref{tab:crack_example}.

%------------------------------------------------------------------------------
\section{Conclusions}
\label{sec:conclusions}

We have analyzed two lowest-order hybridizable DG methods for the Stokes
problem, while assuming only $H^{1+s}$-regularity of the exact velocity
solution for any $s \in [0, 1]$.  A salient feature of our analysis is 
that it allows for the case of a domain with cracks.  
The key ingredient in our analysis is a suitable upper
bound on the consistency error of the hybridizable DG methods,
which we have derived by means of a divergence-preserving enrichment operator.
Our resultant error estimates for the velocity
are pressure-robust and optimal in the discrete energy norm.  We also
obtained an error bound for the pressure that is dependent on the
velocity only.
Our theoretical findings are supported by various numerical examples.

\section*{Statements and Declarations}

SR gratefully acknowledges support from the Natural Sciences and
Engineering Research Council of Canada through the Discovery Grant
program (RGPIN-05606-2015).

%------------------------------------------------------------------------------
\bibliographystyle{spmpsci}
\providecommand{\urlprefix}{}	% Overwrite spmpsci prefixing URLs
\bibliography{references}

\begin{thebibliography}{10}
\providecommand{\url}[1]{{#1}}
\providecommand{\urlprefix}{URL }
\expandafter\ifx\csname urlstyle\endcsname\relax
  \providecommand{\doi}[1]{DOI~\discretionary{}{}{}#1}\else
  \providecommand{\doi}{DOI~\discretionary{}{}{}\begingroup
  \urlstyle{rm}\Url}\fi

\bibitem{arnold1984stable}
Arnold, D.N., Brezzi, F., Fortin, M.: A stable finite element for the {Stokes}
  equations.
\newblock Calcolo \textbf{21}(4), 337--344 (1984).
\newblock \urlprefix\url{https://doi.org/10.1007/BF02576171}

\bibitem{badia2014error}
Badia, S., Codina, R., Gudi, T., Guzm{\'a}n, J.: Error analysis of
  discontinuous {Galerkin} methods for the {Stokes} problem under minimal
  regularity.
\newblock IMA J. Numer. Anal. \textbf{34}(2), 800--819 (2013).
\newblock \urlprefix\url{https://doi.org/10.1093/imanum/drt022}

\bibitem{bernardi1985analysis}
Bernardi, C., Raugel, G.: Analysis of some finite elements for the {Stokes}
  problem.
\newblock Math. Comput. \textbf{44}(169), 71--79 (1985).
\newblock \urlprefix\url{https://doi.org/10.2307/2007793}

\bibitem{boffi2013mixed}
Boffi, D., Brezzi, F., Fortin, M.: {Mixed Finite Element Methods and
  Applications}, \emph{{Springer Series in Computational Mathematics}},
  vol.~44.
\newblock Springer, Heidelberg (2013).
\newblock \urlprefix\url{https://doi.org/10.1007/978-3-642-36519-5}

\bibitem{brenner2008mathematical}
Brenner, S.C., Scott, L.R.: {The Mathematical Theory of Finite Element
  Methods}, \emph{{Texts in Applied Mathematics}}, vol.~15, 3rd edn.
\newblock Springer, New York (2008).
\newblock \urlprefix\url{https://doi.org/10.1007/978-0-387-75934-0}

\bibitem{chorfi2014geometric}
Chorfi, N.: Geometric singularities of the {Stokes} problem.
\newblock Abstr. Appl. Anal. \textbf{2014}, Article ID 491326 (2014).
\newblock \urlprefix\url{http://dx.doi.org/10.1155/2014/491326}

\bibitem{cockburn2009unified}
Cockburn, B., Gopalakrishnan, J., Lazarov, R.: Unified hybridization of
  discontinuous {Galerkin}, mixed, and continuous {Galerkin} methods for second
  order elliptic problems.
\newblock SIAM J. Numer. Anal. \textbf{47}(2), 1319--1365 (2009).
\newblock \urlprefix\url{https://doi.org/10.1137/070706616}

\bibitem{cockburn2007note}
Cockburn, B., Kanschat, G., Sch{\"o}tzau, D.: A note on discontinuous
  {Galerkin} divergence-free solutions of the {Navier}--{Stokes} equations.
\newblock J. Sci. Comput. \textbf{31}(1-2), 61--73 (2007).
\newblock \urlprefix\url{https://doi.org/10.1007/s10915-006-9107-7}

\bibitem{cockburn2014divergence}
Cockburn, B., {Sayas, F.-J.}: Divergence-conforming {HDG} methods for {Stokes}
  flows.
\newblock Math. Comput. \textbf{83}(288), 1571--1598 (2014).
\newblock \urlprefix\url{https://doi.org/10.1090/S0025-5718-2014-02802-0}

\bibitem{dauge_stokesreg1}
Dauge, M.: {Elliptic Boundary Value Problems on Corner Domains}, \emph{{Lecture
  Notes in Mathematics}}, vol. 1341.
\newblock Springer, Berlin (1988).
\newblock \urlprefix\url{https://doi.org/10.1007/BFb0086682}

\bibitem{dauge_stokesreg2}
Dauge, M.: Stationary {Stokes} and {Navier}--{Stokes} systems on two- or
  three-dimensional domains with corners. {Part I}. {Linearized} equations.
\newblock SIAM J. Math. Anal. \textbf{20}(1), 74--97 (1989).
\newblock \urlprefix\url{https://doi.org/10.1137/0520006}

\bibitem{di2011mathematical}
{Di Pietro, D.A.}, Ern, A.: {Mathematical Aspects of Discontinuous Galerkin
  Methods}, \emph{{Mathématiques et Applications}}, vol.~69.
\newblock Springer, Berlin (2012).
\newblock \urlprefix\url{https://doi.org/10.1007/978-3-642-22980-0}

\bibitem{dupont1980polynomial}
Dupont, T., Scott, R.: Polynomial approximation of functions in {Sobolev}
  spaces.
\newblock Math. Comput. \textbf{34}(150), 441--463 (1980).
\newblock \urlprefix\url{https://doi.org/10.1090/S0025-5718-1980-0559195-7}

\bibitem{ern2013theory}
Ern, A., {Guermond, J.-L.}: {Theory and Practice of Finite Elements},
  \emph{{Applied Mathematical Sciences}}, vol. 159.
\newblock Springer, New York (2004).
\newblock \urlprefix\url{https://doi.org/10.1007/978-1-4757-4355-5}

\bibitem{ern2017finitequasi}
Ern, A., {Guermond, J.-L.}: Finite element quasi-interpolation and best
  approximation.
\newblock ESAIM Math. Model. Numer. Anal. \textbf{51}(4), 1367--1385 (2017).
\newblock \urlprefix\url{https://doi.org/10.1051/m2an/2016066}

\bibitem{fu2019explicit}
Fu, G.: An explicit divergence-free {DG} method for incompressible flow.
\newblock Comput. Methods Appl. Mech. Eng. \textbf{345}, 502--517 (2019).
\newblock \urlprefix\url{https://doi.org/10.1016/j.cma.2018.11.012}

\bibitem{guzey2007embedded}
G{\"u}zey, S., Cockburn, B., {Stolarski, H.K.}: The embedded discontinuous
  {Galerkin} method: application to linear shell problems.
\newblock Int. J. Numer. Methods Eng. \textbf{70}(7), 757--790 (2007).
\newblock \urlprefix\url{https://doi.org/10.1002/nme.1893}

\bibitem{guzman2014conforming}
Guzm{\'a}n, J., Neilan, M.: Conforming and divergence-free {Stokes} elements on
  general triangular meshes.
\newblock Math. Comput. \textbf{83}(285), 15--36 (2014).
\newblock \urlprefix\url{https://doi.org/10.1090/S0025-5718-2013-02753-6}

\bibitem{guzman_neilan_3d_stokes}
Guzmán, J., Neilan, M.: {Conforming and divergence-free {Stokes} elements in
  three dimensions}.
\newblock IMA J. Numer. Anal. \textbf{34}(4), 1489--1508 (2014).
\newblock \urlprefix\url{https://doi.org/10.1093/imanum/drt053}

\bibitem{john2017divergence}
John, V., Linke, A., Merdon, C., Neilan, M., Rebholz, L.G.: On the divergence
  constraint in mixed finite element methods for incompressible flows.
\newblock SIAM Rev. \textbf{59}(3), 492--544 (2017).
\newblock \urlprefix\url{https://doi.org/10.1137/15M1047696}

\bibitem{karakashian2003posteriori}
Karakashian, O.A., Pascal, F.: A posteriori error estimates for a discontinuous
  {Galerkin} approximation of second-order elliptic problems.
\newblock SIAM J. Numer. Anal. \textbf{41}(6), 2374--2399 (2003).
\newblock \urlprefix\url{https://doi.org/10.1137/S0036142902405217}

\bibitem{kellogg1976regularity}
{Kellogg, R.B.}, {Osborn, J.E.}: A regularity result for the {Stokes} problem
  in a convex polygon.
\newblock J. Funct. Anal. \textbf{21}(4), 397--431 (1976).
\newblock \urlprefix\url{https://doi.org/10.1016/0022-1236(76)90035-5}

\bibitem{moment_div_preserving}
Kreuzer, C., Verf{\"u}rth, R., Zanotti, P.: Quasi-optimal and pressure robust
  discretizations of the {Stokes} equations by moment- and
  divergence-preserving operators.
\newblock Comput. Methods Appl. Math. \textbf{21}(2), 423--443 (2021).
\newblock \urlprefix\url{https://doi.org/10.1515/cmam-2020-0023}

\bibitem{kreuzer2020quasi}
Kreuzer, C., Zanotti, P.: Quasi-optimal and pressure-robust discretizations of
  the {Stokes} equations by new augmented {Lagrangian} formulations.
\newblock IMA J. Numer. Anal. \textbf{40}(4), 2553--2583 (2019).
\newblock \urlprefix\url{https://doi.org/10.1093/imanum/drz044}

\bibitem{labeur:2012}
Labeur, R.J., Wells, G.N.: Energy stable and momentum conserving hybrid finite
  element method for the incompressible {N}avier--{S}tokes equations.
\newblock SIAM J. Sci. Comput. \textbf{34}(2), A889--A913 (2012).
\newblock \urlprefix\url{http://dx.doi.org/10.1137/100818583}

\bibitem{lehrenfeld2016high}
Lehrenfeld, C., Sch{\"o}berl, J.: High order exactly divergence-free hybrid
  discontinuous {Galerkin} methods for unsteady incompressible flows.
\newblock Comput. Methods Appl. Mech. Eng. \textbf{307}, 339--361 (2016).
\newblock \urlprefix\url{https://doi.org/10.1016/j.cma.2016.04.025}

\bibitem{li2014new}
Li, M., Mao, S., Zhang, S.: New error estimates of nonconforming mixed finite
  element methods for the {Stokes} problem.
\newblock Math. Methods Appl. Sci. \textbf{37}(7), 937--951 (2014).
\newblock \urlprefix\url{https://doi.org/10.1002/mma.2849}

\bibitem{linke2019pressure}
Linke, A., Merdon, C., Neilan, M.: Pressure-robustness in quasi-optimal a
  priori estimates for the {Stokes} problem.
\newblock Electron. Trans. Numer. Anal. \textbf{52}, 281--294 (2020).
\newblock \urlprefix\url{https://doi.org/10.1553/etna_vol52s281}

\bibitem{linke2018quasi}
Linke, A., Merdon, C., Neilan, M., Neumann, F.: Quasi-optimality of a
  pressure-robust nonconforming finite element method for the {Stokes}-problem.
\newblock Math. Comput. \textbf{87}(312), 1543--1566 (2018).
\newblock \urlprefix\url{https://doi.org/10.1090/mcom/3344}

\bibitem{rhebergen2017analysis}
Rhebergen, S., Wells, G.N.: Analysis of a hybridized/interface stabilized
  finite element method for the {Stokes} equations.
\newblock SIAM J. Numer. Anal. \textbf{55}(4), 1982--2003 (2017).
\newblock \urlprefix\url{https://doi.org/10.1137/16M1083839}

\bibitem{rhebergen2018hybridizable}
Rhebergen, S., Wells, G.N.: A hybridizable discontinuous {Galerkin} method for
  the {Navier}--{Stokes} equations with pointwise divergence-free velocity
  field.
\newblock J. Sci. Comput. \textbf{76}(3), 1484--1501 (2018).
\newblock \urlprefix\url{https://doi.org/10.1007/s10915-018-0671-4}

\bibitem{rhebergen2020embedded}
Rhebergen, S., Wells, G.N.: An embedded--hybridized discontinuous {Galerkin}
  finite element method for the {Stokes} equations.
\newblock Comput. Methods Appl. Mech. Eng. \textbf{358}, 112619 (2020).
\newblock \urlprefix\url{https://doi.org/10.1016/j.cma.2019.112619}

\bibitem{ngsolve}
Sch{\"o}berl, J.: {C++11} implementation of finite elements in {NGSolve}.
\newblock {Technical Report, ASC Report 30/2014}, {Institute for Analysis and
  Scientific Computing}, {Vienna University of Technology} (2014).
\newblock
  \urlprefix\url{https://www.asc.tuwien.ac.at/~schoeberl/wiki/publications/ngs-cpp11.pdf}

\bibitem{scott1990finite}
Scott, L.R., Zhang, S.: Finite element interpolation of nonsmooth functions
  satisfying boundary conditions.
\newblock Math. Comput. \textbf{54}(190), 483--493 (1990).
\newblock \urlprefix\url{https://doi.org/10.1090/S0025-5718-1990-1011446-7}

\bibitem{veeser2017quasi_nonlipschitz}
Veeser, A., Zanotti, P.: Quasi-optimal nonconforming methods for second-order
  problems on domains with {non-Lipschitz} boundary.
\newblock In: F.A. Radu, K.~Kumar, I.~Berre, J.M. Nordbotten, I.S. Pop (eds.)
  Numerical Mathematics and Advanced Applications ENUMATH 2017, pp. 461--469.
  Springer, Cham (2019).
\newblock \urlprefix\url{https://doi.org/10.1007/978-3-319-96415-7_41}

\bibitem{verfurth1989posteriori}
Verf{\"u}rth, R.: A posteriori error estimators for the {Stokes} equations.
\newblock Numer. Math. \textbf{55}(3), 309--325 (1989).
\newblock \urlprefix\url{https://doi.org/10.1007/BF01390056}

\bibitem{verfurth2019quasi}
Verf{\"u}rth, R., Zanotti, P.: A quasi-optimal {Crouzeix--Raviart}
  discretization of the {Stokes} equations.
\newblock SIAM J. Numer. Anal. \textbf{57}(3), 1082--1099 (2019).
\newblock \urlprefix\url{https://doi.org/10.1137/18M1177688}

\bibitem{wang2007new}
Wang, J., Ye, X.: New finite element methods in computational fluid dynamics by
  {H(div)} elements.
\newblock SIAM J. Numer. Anal. \textbf{45}(3), 1269--1286 (2007).
\newblock \urlprefix\url{https://doi.org/10.1137/060649227}

\end{thebibliography}
%------------------------------------------------------------------------------
\appendix
%------------------------------------------------------------------------------
\section{Enrichment operator}
\label{sec:appendix_a}

We prove here \cref{lemma:enrichment} for the three-dimensional case
($d=3$). We shall use the angled bracket notation $\ip{\cdot}{\cdot}_D$;
this is simply used to denote the $L^2$-inner-product on a domain $D$
with dimension strictly less than~$d=3$.

For $K \in \mathcal{T}$ we recall from \cref{subsec:mesh} that
$\mathcal{F}_{K, h} \subset \mathcal{F}_h$ denotes the four faces of
$K$. Also, let $\mathcal{E}_K$ denote the six edges of $K$ and
$\mathcal{V}_K$ the four vertices of $K$.  The collection of all mesh
edges is written as $\mathcal{E}_h := \cup_{K \in \mathcal{T}}
\mathcal{E}_K = \mathcal{E}_b \cup \mathcal{E}_i$ where $\mathcal{E}_b$
denotes the boundary edges and $\mathcal{E}_i$ the interior edges.
Likewise, the collection of all mesh vertices is written as
$\mathcal{V}_h := \cup_{K \in \mathcal{T}} \mathcal{V}_K = \mathcal{V}_b
\cup \mathcal{V}_i$ where $\mathcal{V}_b$ denotes the boundary vertices
and $\mathcal{V}_i$ the interior vertices.  For an interior edge $e \in
\mathcal{E}_i$, we define the average of a function $v$ on $e$ as
\begin{equation*}
  \av{v}_e := \frac{1}{\abs{\mathcal{T}_e}}
  \sum_{\substack{K \in \mathcal{T}_e}} v_K|_e,
\end{equation*}
where $\mathcal{T}_e := \{K \in \mathcal{T} : e \in \mathcal{E}_K \}$
denotes the collection of elements having $e$ as an edge and $v_K :=
v|_K$. On boundary edges $e \in \mathcal{E}_b$, it will be convenient to
define $\av{v}_e := 0$.  Similarly, for an interior vertex $a \in
\mathcal{V}_i$, the average of a function $v$ on $a$ is defined as
\begin{equation*}
  \av{v}_a := \frac{1}{\abs{\mathcal{T}_a}}
  \sum_{\substack{K \in \mathcal{T}_a}} v_K(a),
\end{equation*}
where $\mathcal{T}_a := \{ K \in \mathcal{T} : a \in \mathcal{V}_K \}$
denotes the collection of elements having $a$ as a vertex.  On boundary
vertices $a \in \mathcal{V}_b$ it will be convenient to define $\av{v}_a
:= 0$.  Finally, throughout this proof we continue to use the definition
\crefrange{eq:av_def_interior}{eq:jump_av_def_boundary} of the average
operator on faces.

For $K \in \mathcal{T}$, let $V(K)$ denote the local three-dimensional
Guzm\'an--Neilan finite element space defined by
\cite[eq.~(3.9)]{guzman_neilan_3d_stokes}.  This space has the
properties \cite[Lemma~3.4]{guzman_neilan_3d_stokes}
\begin{alignat*}{2}
  &[\mathcal{P}_1(K)]^3 \subset V(K), \qquad
  &&V(K) \subset [W^{1, \infty}(K) \cap C^0(\bar{K})]^3, \\
  &\nabla \cdot V(K) \subset \mathcal{P}_0(K), \qquad
  &&V(K)|_{\partial K} \subset [\mathcal{P}_3(\partial K)]^3.
\end{alignat*}
Moreover, a set of unisolvent degrees of freedom for $v \in V(K)$ is
given by \cite[Theorem~3.5]{guzman_neilan_3d_stokes}
\begin{subequations}
  \label{eq:gn_dofs}
  \begin{alignat}{2}
    &v(a) \qquad
    &&\fra a \in \mathcal{V}_K, \\
    &\ip{v}{w}_e \qquad
    &&\fra e \in \mathcal{E}_K, w \in [\mathcal{P}_1(e)]^3, \\
    &\ip{v}{w}_F \qquad
    &&\fra F \in \mathcal{F}_{K,h}, w \in [\mathcal{P}_0(F)]^3.
  \end{alignat}
\end{subequations}

For $K \in \mathcal{T}$ we now define the local operator
$E_K : X_h^{\mathrm{BDM}} \rightarrow V(K)$ as follows.  For
$v_h \in X_h^{\mathrm{BDM}}$ we require that
\begin{subequations}
  \label{eq:local_E_defn}
  \begin{alignat}{2}
    &(E_K v_h)(a) = \av{v_h}_a \qquad
    &&\fra a \in \mathcal{V}_K, \\
    &\ip{E_K v_h - \av{v_h}_e}{w}_e = 0 \qquad
    &&\fra e \in \mathcal{E}_K, w \in [\mathcal{P}_1(e)]^3, \\
    &\ip{E_K v_h - \av{v_h}}{w}_F = 0 \qquad
    &&\fra F \in \mathcal{F}_{K,h} \cap \mathcal{F}_i, w \in
    [\mathcal{P}_0(F)]^3,
    \label{eq:local_E_defn_int_face} \\
    &\ip{E_K v_h}{w}_F = 0 \qquad
    &&\fra F \in \mathcal{F}_{K,h} \cap \mathcal{F}_b, w \in
    [\mathcal{P}_0(F)]^3.
  \end{alignat}
\end{subequations}
The degrees of freedom \cref{eq:gn_dofs} imply that the operator $E_K$
is well-defined.  We then define
$E_h : X_h^{\mathrm{BDM}} \rightarrow H_0^1(\Omega)^d$ by 
$(E_h v_h)|_K = E_K v_h$
for all $v_h \in X_h^{\mathrm{BDM}}$.  Utilizing the inclusion
$V(K)|_{\partial K} \subset [\mathcal{P}_3(\partial K)]^3$ one can show that
$\jump{E_h v_h}|_F = 0$ for all $F \in \mathcal{F}_h$, and thus
$E_h v_h \in H_0^1(\Omega)^d$ holds.  It remains to verify that $E_h$ satisfies
\crefrange{itm:e_h_facet_mean}{itm:e_h_bounded} from
\cref{lemma:enrichment}.

That \cref{itm:e_h_facet_mean} holds is an immediate consequence of
\cref{eq:local_E_defn_int_face}.  To prove \cref{itm:e_h_div}, consider
the space $\mathcal{P}_{0,h} :=
\{q_h \in L^2(\Omega) : q_h|_K \in \mathcal{P}_0(K) \
\forall K \in \mathcal{T} \}$ of piecewise constant functions.  Let $v_h
\in X_h^{\mathrm{BDM}}$ and $q_h \in \mathcal{P}_{0,h}$.  Then element-wise
integration by parts and \cref{eq:local_E_defn_int_face} shows that
\begin{align}
  \label{eq:e_h_ibp_argument}
  \begin{split}
    \int_{\Omega} (\nabla \cdot E_h v_h) q_h \dif x
    &= \sum_{F \in \mathcal{F}_i} \int_F (E_h v_h \cdot n_F) \jump{q_h} \dif s \\
    &= \sum_{F \in \mathcal{F}_i} \int_F (\av{v_h} \cdot n_F) \jump{q_h} \dif s \\
    &= \int_{\Omega} (\nabla \cdot v_h) q_h \dif x,
  \end{split}
\end{align}
where the last equality in \cref{eq:e_h_ibp_argument} follows from the
fact that $\jump{v_h}|_F \cdot n_F = 0$ for all $F \in \mathcal{F}_h$.
But \cref{itm:e_h_div} now follows from \cref{eq:e_h_ibp_argument} as
$\nabla \cdot E_h v_h, \nabla \cdot v_h \in \mathcal{P}_{0,h}$ and $q_h \in
\mathcal{P}_{0,h}$ is arbitrary.

To prove \cref{itm:e_h_interp}, fix $k \in \{0, 1\}$ and $v_h \in
X_h^{\mathrm{BDM}}$.  Consider $K \in \mathcal{T}$ and set $w_K := E_K
v_h, v_K = v_h|_K$ and $z_K = w_K - v_K$.  Since $v_K \in [\mathcal{P}_1(K)]^3
\subset V(K)$, there holds $z_K \in V(K)$.  A scaling argument utilizing
the degrees of freedom \cref{eq:gn_dofs} then shows that
\begin{align}
  \label{itm:e_h_interp_three_terms}
  \begin{split}
    h_K^{2(k-1)} \abs{z_K}_{k,K}^2 \lesssim &
    \sum_{a \in \mathcal{V}_K} h_K \norm{z_K(a)}_2^2
    + \sum_{e \in \mathcal{E}_K}
    \sup_{\substack{\kappa_h \in [\mathcal{P}_1(e)]^3
    \\
    \norm{\kappa_h}_e = 1}} \abs{\ip{z_K}{\kappa_h}_e}^2
    \\
    &+ \sum_{F \in \mathcal{F}_{K,h}} \frac{1}{h_K}
    \sup_{\substack{\kappa_h \in [\mathcal{P}_0(F)]^3
    \\
    \norm{\kappa_h}_F = 1}} \abs{\ip{z_K}{\kappa_h}_F}^2
    \\
    \leq
    &\underbrace{\sum_{a \in \mathcal{V}_K} h_K \norm{\av{v_h}_a - v_K(a)}_2^2}_{I_1}
    + \underbrace{\sum_{e \in \mathcal{E}_K} \norm{\av{v_h}_e - v_K}_e^2}_{I_2}
    \\
    &+ \underbrace{
      \sum_{\substack{F \in \mathcal{F}_{K,h} \\ F \in \mathcal{F}_i}}
      \frac{1}{h_K} \norm{\av{v_h} - v_K}_F^2
      + \sum_{\substack{F \in \mathcal{F}_{K,h} \\ F \in \mathcal{F}_b}}
      \frac{1}{h_K} \norm{v_K}_F^2
    }_{I_3}.
  \end{split}
\end{align}
Because $\partial \Omega$ has codimension one, every boundary vertex of
the mesh is contained in some boundary face of the mesh, and likewise
for boundary edges. As a result, the same arguments used in
\cite[Lemma~4.7]{linke2018quasi} show that
\begin{align}
  I_1 &\lesssim \sum_{a \in \mathcal{V}_K} \sum_{F \in \mathcal{F}_a} \frac{1}{h_F} \norm{\jump{v_h}}_F^2,
	\label{eq:e_h_I1_bound} \\
  I_2 &\lesssim \sum_{e \in \mathcal{E}_K} \sum_{F \in \mathcal{F}_e} \frac{1}{h_F} \norm{\jump{v_h}}_F^2,
	\label{eq:e_h_I2_bound}
\end{align}
where $\mathcal{F}_a \subset \mathcal{F}_h$ denotes the collection of
all mesh faces having $a$ as a vertex, and $\mathcal{F}_e \subset
\mathcal{F}_h$ denotes the collection of all mesh faces having $e$ as an
edge.  We note that, due to midpoint continuity of Crouzeix--Raviart
elements, there is no term analogous to $I_3$ in
\cite[Lemma~4.7]{linke2018quasi}.  Fortunately, it is easy to see that
we can bound $I_3$ by means of
\begin{align}
  \label{eq:e_h_I3_bound}
  I_3 \lesssim \sum_{F \in \mathcal{F}_{K,h}} \frac{1}{h_F}
  \norm{\jump{v_h}}_F^2.
\end{align}
Using the bounds \crefrange{eq:e_h_I1_bound}{eq:e_h_I3_bound} in
\cref{itm:e_h_interp_three_terms}, and summing over $K \in \mathcal{T}$,
one obtains
\begin{align*}
  \sum_{K \in \mathcal{T}} h_K^{2(k-1)} \abs{E_h v_h - v_h}_{k,K}^2
  \lesssim \sum_{F \in \mathcal{F}_h} \frac{1}{h_F} \norm{\jump{v_h}}_F^2
  = \jsnorm{v_h}^2,
\end{align*}
so that \cref{itm:e_h_interp} holds.  Lastly, \cref{itm:e_h_bounded}
follows from \cref{itm:e_h_interp} with $k=1$ and the triangle
inequality:
\begin{align*}
  \norm{\nabla E_h v_h}
  &\leq \Big( \sum_{K \in \mathcal{T}} \abs{v_h - E_h v_h}_{1, K}^2 \Big)^{1/2}
    + \Big( \sum_{K \in \mathcal{T}} \abs{v_h}_{1,K}^2 \Big)^{1/2} \\
  &\lesssim \jsnorm{v_h} + \Big( \sum_{K \in \mathcal{T}} \abs{v_h}_{1,K}^2 \Big)^{1/2} \\
  &\lesssim \dgnorm{v_h}.
\end{align*}
This completes the proof of \cref{lemma:enrichment}.  \qed

\end{document}